\theoremstyle{plain}
\newtheorem{thm}{Theorem}[section]
\newtheorem{lem}{Lemma}[section]
\theoremstyle{definition}
\newtheorem{df}{Definition}[section]
\newtheorem{rem}{Remark}[section]
\newcommand{\FF}{\mathbb{F}}
\newcommand{\CC}{\mathbb{C}}
\newcommand{\I}{\mathrm{I}}
\newcommand{\II}{\mathrm{II}}
\DeclareMathOperator{\supp}{supp}
\DeclareMathOperator{\wt}{wt}
\DeclareMathOperator{\diag}{diag}
\def\bm#1{\mathbf{#1}}
\begin{document}

\title[Jacobi polynomials, invariant rings, and designs]{Jacobi polynomials, invariant rings, and generalized $t$-designs}

\author[Chakraborty]{Himadri Shekhar Chakraborty}
\address{Department of Mathematics, Shahjalal University of Science and Technology, Sylhet 3114, Bangladesh}
\email{himadri-mat@sust.edu}

\author[Hamid]{Nur Hamid*}
\address{Department of Mathematics Education, Nurul Jadid University, Paiton, Probolinggo, Indonesia}
\email{nurhamid@unuja.ac.id}

\author[Miezaki]{Tsuyoshi Miezaki}
\address{Faculty of Science and Engineering, Waseda University, Tokyo 169-8555, Japan}
\email{miezaki@waseda.jp}

\author[Oura]{Manabu Oura}
\address
{Faculty of Mathematics and Physics, Kanazawa University, Ishikawa 920-1192, Japan}
\email{oura@se.kanazawa-u.ac.jp} 

\thanks {*Corresponding author}

\date{}
\maketitle

\begin{abstract}
	In the present paper, 
	we provide results that 
	relate the Jacobi polynomials in genus $g$. 
	We show that 
	if a code is $t$-homogeneous 
	that is, 
	the codewords of the code for every given weight hold a $t$-design,
	then 
	its Jacobi polynomial in genus $g$ with 
	composition $T$ with $|T|\leq t$ 
	can be obtained from its weight enumerator in genus~$g$
	using the polarization operator. 
	Using this fact, 
	we investigate the invariant ring, which relates 
	the homogeneous Jacobi polynomials of the binary codes in genus $g$. 
	Specifically, 
	the generators of the invariant ring appearing for $g=1$ are obtained.
	Moreover, we define the split Jacobi polynomials in genus~$g$
	and obtain the MacWilliams type identity for it. 
	A split generalization for higher genus cases 
	of the relation between the
	Jacobi polynomials and weight enumerator
	of a $t$-homogeneous code also given.
\end{abstract}

{\small
\noindent
{\bfseries Key Words:}
Codes, weight enumerators, Jacobi polynomials, designs, invariant rings.\\ \vspace{-0.15in}

\noindent
2010 {\it Mathematics Subject Classification}. Primary 94B05;
Secondary 11T71, 11F11.\\ \quad
}


\section{Introduction}

The coding theoretical analogue of Jacobi forms (cf.~\cite{EZ}) are 
known as Jacobi polynomials, representing a significant
generalization of the weight enumerator of a code.
Note that one of most remarkable generalization of 
Siegel modular forms (cf.~\cite{K}) in lattice theory (cf.~\cite{CS}) 
are Jacobi forms. 
The notion of Jacobi polynomials for codes 
was first introduced by Ozeki \cite{Ozeki}
to describe the transformation formula 
for the Jacobi polynomials of a code. 
This formula includes the MacWilliams identity (cf.~\cite{MacWilliams}) 
as a special case for the weight enumerator of the code.
Later, Bonnecaze et al.~\cite{BMS} gave the notion of 
Jacobi polynomial in the sense of coordinates.
They pointed out that, in some cases, 
the Jacobi polynomials can be determined uniquely
from the weight enumerators with the help of polarization operator.
Note that Ozeki's definition depends on a reference vector,
while Bonnecaze el al. depends on a partition of the support.

Many authors studied 
several generalizations of the Jacobi polynomials 
in coding theory.
Among them the higher genus generalizations were
studied in~\cite{CM2021, CMO2022, HO}
and split type generalizations were 
studied in~\cite{CIT2024}. 
Moreover, Cameron~\cite{Cameron2009} gave a 
celebrated generalization 
of the classical $t$-designs
that we prefer to call as the generalized $t$-designs. 
In a recent work~\cite{CMOT2023}, 
the concept of Jacobi polynomials 
attached to multiple reference vectors 
was introduced. 
This approach provided a design theoretical application of 
these Jacobi polynomials through the use of generalized $t$-designs.

Invariant theory (cf.~\cite{NRS}) plays an important role in the 
study of some special codes,
such as binary self-dual doubly-even codes,
which are known as Type~$\II$ codes.
Gleason~\cite{Gleason} proved that
the invariant ring over~$\CC$
under the group of order~$192$ generated by
the matrices:
\[
	\frac{1}{\sqrt{2}}
	\begin{pmatrix*}[r]
		1 & 1\\
		1 & -1
	\end{pmatrix*}
	\mbox{ and }
	\begin{pmatrix}
		1 & 0\\
		0 & i
	\end{pmatrix},
\]
can be represented by the weight enumerators of
Type~$\II$ codes.
Numerous articles in algebraic combinatorics 
have explored various generalizations of this
result. 
For instance, 
Duke~\cite{Duke} and Runge's~\cite{Runge}
study on Siegel modular forms 
investigates the weight enumerators of Type~$\II$ codes
in genus~$g$. 
Eventually, Bannai and Ozeki's~\cite{BannaiOzeki}
work on Jacobi forms determined the Molien series (cf.~\cite{Molien}) 
of the invariant ring of Jacobi polynomials;
see also~\cite{BannaiOzekiTeranishi}.
By defining a new map from the space of Jacobi polynomials into the space of Jacobi forms,
they also extended Brou\'e-Enguehard (cf.~\cite{BE1972}) correspondence.

In this paper, 
we studied Jacobi polynomials of codes in genus $g$
in the sense of coordinate positions. 
Throughout this study, 
by codes we mean binary linear codes.
We show how the polarization operator 
may be used to obtain the Jacobi polynomials  
of a $t$-homogenous code in genus~$g$
with composition~$T$ such that $|T|\leq t$. 
Using this fact, 
we investigate the invariant ring 
of Jacobi polynomials of 
Type~$\II$ codes in genus $g$. 
For the computation, 
we give the generators for 
the invariant ring of Jacobi polynomials 
of Type~$\II$ codes in genus 1.
We also show that these generators are enough 
to generate the invariant ring of Jacobi polynomials
of Type~$\II$ codes. 
In genus 2, 
we obtain the generators for the invariant ring of 
Jacobi polynomials for Type~$\II$ codes of lengths up to~$24$.  
Additionally, we introduce the split Jacobi polynomials in genus~$g$ 
and derive the MacWilliams-type identity for them. 
As an application of generalized $t$-designs,
we also provide a split generalization of the relationship 
between Jacobi polynomials and the weight enumerator 
of a $t$-homogeneous code for higher genus cases.

This paper is organized as follows. 
In Section~\ref{Sec:Preli}, 
we discuss definitions and the basic properties of codes 
and generalized $t$-designs
that are needed to understand this paper. 
In Section~\ref{Sec:JacobiPoly}, 
we give the MacWilliams type identity (Theorem~\ref{Thm:JacMacGen_g}) 
for Jacobi polynomials in genus~$g$. 
We also show how a polarization operator acts 
to obtain the Jacobi polynomials in genus~$g$ 
(Theorems~\ref{Thm:gthJacPolythomo}, \ref{Thm:1-homo_g}, \ref{Thm:t-homo_g}).
In Section~\ref{Sec:SplitJacobiPoly},
we give the MacWilliams type identity (Theorem~\ref{Thm:gthSplitJacobiMacWilliams})
for the split Jacobi polynomials in genus~$g$.
We also observe (Theorems~\ref{Thm:JacToDesign}, \ref{Thm:Main1}, \ref{Thm:SplitMain2}) how polarization operator acts to obtain the split Jacobi polynomials 
in genus~$g$ attached to multiple sets of coordinate places of a code.
In Section~\ref{Sec:InvRing},
we construct the group under which
the Jacobi polynomials in genus~$g$ of Type~$\II$
codes are invariant (Theorem~\ref{Thm:JacInvUnderGg}).
As a generalization of~\cite[Theorem 1.1]{BannaiOzeki},
we also determine the dimension formulae 
(Theorems~\ref{Thm:MolienJacG11}, \ref{Thm:MolienJac21}) 
of the invariant ring 
of Jacobi polynomials of Type~$\II$ codes in genus~$1$ and
genus~$2$.
In Section~\ref{Sec:GenInvRing},
we evaluate the generators of the invariant ring
of Jacobi polynomials of Type~$\II$ codes in genus~$1$ (Theorem~\ref{Thm:JacGenJ8J24}) and
genus~$2$.

All computer calculations in this paper were done with the help of
SageMath~\cite{SageMath}.

\section{Preliminaries}\label{Sec:Preli}

\subsection{Binary linear codes}
  
Let $\FF_{2}^{n}$ be the vector space of dimension $n$ over 
the binary field $\FF_{2}$ with two elements $0$ and $1$.
The elements of $\FF_{2}^{n}$ are known as \emph{vectors}.
The \emph{weight} of a vector
${u} = (u_1,\dots, u_n)\in \FF_{2}^{n}$ 
is denoted by~$\wt({u})$ and
defined to be the number of $i$'s such that $u_{i} \neq 0$.
Let ${u} = (u_1,\dots, u_n)$ 
and ${v} = (v_1,\dots,v_n)$ 
be the vectors in $\FF_{2}^{n}$. 
Then the \emph{inner product} of two vectors 
${u},{v} \in \FF_{2}^{n}$ 
is given by
\[
	{u} \cdot {v} 
	:= 
	u_{1} v_{1} + \dots + u_{n}v_{n}.
\]

An $\FF_{2}$-linear code or a binary code or simply a code of length~$n$ 
is a vector subspace of $\FF_{2}^{n}$. 
The elements of a code are called \emph{codewords}. 
A binary code for which the weights of all the codewords
are even (resp. divisible by~$4$) is called \emph{even}
(resp. \emph{doubly-even}) codes.
The \emph{dual code} of a code~$C$ 
of length~$n$ is defined by
\[
	C^\perp 
	:= 
	\{
	{v}\in \FF_{2}^{n} 
	\mid 
	{u} \cdot {v} = 0 
	\text{ for all } 
	{u}\in C
	\}. 
\]
An code $C$ is called \emph{self-dual} if $C = C^\perp$. 
It is well known that 
the length~$n$ of a self-dual code 
is even and the dimension is $n/2$.
To study self-dual codes in detail, we refer the readers 
to~\cite{BMS1972, Gleason, NRS}. 
A self-dual code~$C$ is called \emph{Type}~$\I$ or \emph{Type}~$\II$ 
if~$C$ is even or doubly-even, respectively.

\begin{df}
	Let $C$ be a code of length~$n$.
	Then the \emph{genus~$g$ weight enumerator} of $C$ 
	is defined as
	\[
		W_{C}^{(g)}(\{x_{a}\}_{a \in \FF_{2}^{g}})
		:=
		\sum_{u_{1},\ldots,u_{g} \in C}
		\prod_{a \in \FF_{2}^{g}}
		x_{a}^{n_{a}(u_{1},\ldots,u_{g})},
	\]
	where,
	$n_{a}(u_{1},\ldots,u_{g})$ is the number of coordinate places $i$ 
	such that $a = (u_{1i},\ldots,u_{gi})$. 
\end{df}

Let $[n] := \{1,2,\ldots, n\}$ represent the coordinate positions of
a vector in~$\FF_{2}^{n}$. 
The Jacobi polynomial of a code 
attached to a set $T \subseteq [n]$
were defined in~\cite{BMS} as follows:

\begin{df}\label{Def:JacOne}
	Let $C$ be a code of length~$n$. Then 
	the \emph{Jacobi polynomial} attached to a set $T \subseteq [n]$ of 
	coordinate places of the code $C$ is defined as:
	\[
		J_{C,T}(w,z,x,y) 
		:=
		\sum_{u\in C}
		w^{m_0({u})}z^{m_1({u})}x^{n_0({u})}y^{n_1({u})},
	\]
	where $m_i({u})$ is the Hamming composition of 
	$u$ on $T$ and $n_i({u})$ is the Hamming composition of 
	${u}$ on $[n]\backslash T$.
\end{df}

It well-known that the Jacobi polynomial~$J_{C,T}(x,y)$ of a code~$C$ 
satisfies the following MacWilliams identity (see~\cite{BMS}):
\[
	J_{C^\perp,T}(w,z,x,y)
	=
	\frac{1}{|C|}
	J_{C,T}(w+z,w-z,x+y,x-y).
\]

\subsection{Generalized $t$-designs}

Let $t$, $k$, $\lambda$ be the integers such that 
$\lambda > 0$ and $k > t > 0$. 
Again let 
$\bm{k} := (k_{1},\ldots,k_{\ell})$ such that 
$k = \sum_{i = 1}^{\ell} k_{i}$,
$\bm{v} := (v_{1},\ldots,v_{\ell})$ 
such that $v_{i} \geq k_{i}$ for all $i$.
Let $\bm{X} := (X_{1},\ldots,X_{\ell})$, 
where $X_{i}$'s are pairwise disjoint sets with $|X_{i}| = v_{i}$ for all $i$ and  
$$\mathcal{B} \subseteq \binom{X_{1}}{k_{1}} \times \cdots \times \binom{X_{\ell}}{k_{\ell}}.$$

\begin{df}	
	A $t$-$(\bm{v},\bm{k},\lambda)$ \emph{design} or a \emph{generalized} $t$-\emph{design} (in short) is a pair $\mathcal{D} := (\bm{X},\mathcal{B})$ 
	with the following property:
	if $\bm{t} := (t_{1},\ldots,t_{\ell})$ such that $t = \sum_{i = 1}^{n} t_{i}$ satisfying $0 \leq t_{i} \leq k_{i}$ for all $i$, 
	then for any choice of~$\bm{T} := (T_{1},\ldots,T_{\ell})$ with $T_{i} \in \binom{X_{i}}{t_{i}}$ for all $i$, 
	there are precisely~$\lambda$ members 
	$\bm{K} := (K_{1},\ldots,K_{\ell}) \in \mathcal{B}$ for which $T_{i} \subseteq K_{i}$ for all $i$.
\end{df}

Note that in the case when $\bm{k} = (k)$ and $\bm{v} = (v)$, 
this is precisely the definition of a combinatorial 
$t$-$(v,k,\lambda)$ design or a $t$-\emph{design} (in short). 
Moreover, if $\bm{k} = (0,\ldots,0)$ or $\lambda = 0$,
we call the designs as \emph{trivial generalized $t$-designs}. 
Let us recall the concept of $t$-homogeneous code. 
We say a code $C$ is \textit{$t$-homogeneous} 
if the codewords of every given weight hold a $t$-design.
In case $C$ is $t$-homogeneous, 
the Jacobi polynomial $J_{C,T}$ does not depend on $T$ for $|T|=t$
(see~\cite{BMS}).
For this case, we write $J_{C,t}$ instead of $J_{C,T}$.

Now we form a generalized $t$-design from a code as follows.
Let $\bm{v} = (v_{1},\ldots,v_{\ell})$ such that 
$\sum_{i = 1}^{\ell} v_{i} = n$ and
$\bm{X} = (X_{1},\ldots,X_{\ell})$ of pairwise disjoint sets $X_{i} \subseteq [n]$ with $|X_{i}| = v_{i}$.
Again let ${u} = (u_{1},\ldots,u_{n})\in \FF_{2}^{n}$. 
Then for $X \subseteq [n]$, we define
\begin{align*}
	\supp_{X}({u}) &:= \{i \in X \mid u_{i} \neq 0\},\\
	\bm{K}({u}) &:= (\supp_{X_{1}}({u}),\ldots,\supp_{X_{\ell}}({u})),\\
	\wt_{X}({u}) &:= |\supp_{X}({u})|.
\end{align*}

Again for any positive integer $k$, let 
$\bm{k} = (k_{1},\ldots,k_{\ell})$ such that 
$\sum_{i = 1}^{\ell} k_{i} = k$.
Let $C$ be a code of length~$n$. Then
\begin{align*}
	C_{\bm{k}} 
	& := 
	\{{u} \in C \mid \wt_{X_{i}}({u}) = k_{i} \mbox{ for all } i\},\\  \mathcal{B}(C_{\bm{k}}) 
	& := 
	\{\bm{K}({u}) \mid {u} \in C_{\bm{k}}\}.
\end{align*}
Since $C$ is a binary code of length~$n$, 
$\mathcal{B}(C_{\bm{k}})$ is not a multi-set. We say $C_{\bm{k}}$ is a $t$-$(\bm{v},\bm{k},\lambda)$ design if $(\bm{X},\mathcal{B}(C_{\bm{k}}))$ is a $t$-$(\bm{v},\bm{k},\lambda)$ design. A code is called an $\ell$-\emph{th} $t$-\emph{homogeneous} if the codewords of every given weight~$k$ hold a 
$t$-$(\bm{v},\bm{k},\lambda)$ design.

\section{Jacobi Polynomials}\label{Sec:JacobiPoly}


The purpose of this section is as follows: 
If a code $C$ is $t$-homogeneous then 
its Jacobi polynomial in genus~$g$ with $T$ with $|T|\leq t$ 
can be obtained from the genus~$g$ weight enumerator of~$C$
using the polarization operator. 
This result is a generalization of~\cite[Theorems $3$ and $4$]{BMS}.

The higher genus generalization of Jacobi polynomials of a code 
was given in~\cite{HO}.
Here, we give the generalization in the sense of coordinate places. 

\begin{df}  \label{gen_jacobi}
Let $C$ be a code of length~$n$.
Then the $g$-th Jacobi polynomial of~$C$ attached to a set $T\subseteq [n]$ 
is defined by
\[
	J_{C,T}^{(g)} (\lbrace y_a, x_a \rbrace_{a \in \mathbb{F}_2^g})
	:= 
	\sum_{u_1,...,u_g \in C } 
	\prod_{a \in \mathbb{F}_2^g} 
	y_a^{m_a (u_{1},\ldots,u_{g})}\, x_a^{n_a (u_{1},\ldots,u_{g})}
\]
where $m_a(u_{1},\ldots,u_{g})$ (resp. $n_a (u_{1},\ldots,u_{g})$) 
is the number of~$i$ such that $a = (u_{1i},\ldots,u_{gi}) \in \FF_{2}^{g}$ 
on $T$ (resp. $[n] \backslash T$).
\end{df}

\begin{rem} \label{rem:relation_on_Jacobi}
	The following relations are immediate from Definition~\ref{gen_jacobi}:
	\begin{itemize}
		\item [(i)]
		if $T$ is an empty set, then
		$J_{C,T}^{(g)}= W_C^{(g)}.$
		
		\item [(ii)]
		$J_{C,T}(\lbrace y_a, x_a \rbrace_{a \in \mathbb{F}_2^g})= J_{C,[n]\backslash T}(\lbrace x_a, y_a \rbrace_{a \in \mathbb{F}_2^g})$.
	\end{itemize}
\end{rem}

The following MacWilliams type identity is taken from \cite{HO}.
To keep the variables notation, 
we change the variables in \cite{HO} from $y_{(b_1 \ \cdots \ b_g \ 0)}$ to $x_{(b_1 \ \cdots \ b_g)}$ and 
$y_{(b_1 \ \cdots \ b_g \ 1)}$ to $y_{(b_1 \ \cdots \ b_g)}$.

\begin{thm}[MacWilliams type identity]\label{Thm:JacMacGen_g}
	Let~$C$ be a code of length of length~$n$.
	Let $J_{C,T}^{(g)}(\lbrace y_a, x_a \rbrace_{a \in \mathbb{F}_2^g})$
	be the $g$-th Jacobi polynomial of~$C$ attached to a set~$T \subseteq [n]$.
	Then
	\[
		\begin{aligned} 
			J_{C^{\perp},T}^{(g)}
			& (\lbrace y_a, x_a \rbrace_{a \in \mathbb{F}_2^g})\\ 
			& =  
			\frac{1}{|C|^g}\ 
			J_{C,T}^{(g)} 
			& \left( \left\lbrace  \sum_{b \in \FF_{2}^{g}} (-1)^{a \cdot b} y_b ,
			\sum_{b \in \FF_{2}^{g}} (-1)^{a \cdot b} x_b  \right\rbrace_{a \in \mathbb{F}_2^g} \right).
		\end{aligned}
	\]
\end{thm}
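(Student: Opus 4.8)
The plan is to prove the MacWilliams-type identity for the genus-$g$ Jacobi polynomial by reducing it to the ordinary genus-$g$ MacWilliams identity for the complete weight enumerator. The key observation is that Remark~\ref{rem:relation_on_Jacobi}(i) identifies the Jacobi polynomial attached to the empty set with the weight enumerator $W_C^{(g)}$, so the Jacobi polynomial can be viewed as a weight enumerator in which the coordinates are split into two groups: those in $T$ and those in $[n]\setminus T$. Concretely, I would introduce $2^{g+1}$ formal variables, assigning to each $a\in\FF_2^g$ the variable $y_a$ when the relevant coordinate lies in $T$ and the variable $x_a$ when it lies in $[n]\setminus T$. With this bookkeeping, $J_{C,T}^{(g)}$ becomes a specialization of the genus-$(g)$ complete weight enumerator of $C$ in the larger variable set indexed by $\FF_2^{g+1}$, exactly as signalled by the change of variables from \cite{HO} recorded just before the theorem.

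First I would set up the dictionary precisely. Writing a coordinate $a\in\FF_2^{g+1}$ as $(b_1\cdots b_g\, \varepsilon)$ with $b=(b_1,\dots,b_g)\in\FF_2^g$ and $\varepsilon\in\FF_2$, the last bit $\varepsilon$ encodes membership in $T$: the variable $x_{(b\,0)}$ tracks positions outside $T$ carrying the genus-$g$ pattern $b$, while $y_{(b\,1)}$ tracks positions inside $T$ carrying the pattern $b$. Under this identification, $J_{C,T}^{(g)}$ in the variables $\{y_a,x_a\}_{a\in\FF_2^g}$ is literally the genus-$g$ weight enumerator of $C$ in the $\FF_2^{g+1}$-indexed variables, after relabelling $x_{(b\,0)}\mapsto x_b$ and $y_{(b\,1)}\mapsto y_b$. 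The next step is to invoke the known MacWilliams identity from \cite{HO} for the genus-$g$ weight enumerator in the full variable set: dualizing replaces each variable $x_a$ (for $a\in\FF_2^{g+1}$) by $\sum_{c\in\FF_2^{g+1}}(-1)^{a\cdot c}x_c$, with the scaling factor $1/|C|^g$.

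Then I would translate this transformation back through the dictionary. The inner product $a\cdot c$ in $\FF_2^{g+1}$ decomposes as $b\cdot b' + \varepsilon\varepsilon'$, where $a=(b\,\varepsilon)$ and $c=(b'\,\varepsilon')$. For the $x$-variables (those with $\varepsilon=0$), the summand with $\varepsilon'=1$ contributes a $y$-variable and the one with $\varepsilon'=0$ contributes an $x$-variable, but the sign $(-1)^{\varepsilon\varepsilon'}=(-1)^{0}=1$ is trivial, so both appear with sign $(-1)^{b\cdot b'}$; this does not match the claimed formula, which keeps $x$- and $y$-summands separate. The resolution is that the two groups of coordinates do not mix under dualization in the way a naive specialization would suggest: the correct statement is that the $T$-coordinates and the $[n]\setminus T$-coordinates transform independently, each by the genus-$g$ Hadamard-type transform $t\mapsto\sum_{b'}(-1)^{b\cdot b'}t_{b'}$. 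I would therefore verify directly, either by carefully tracking the specialization or by re-deriving the identity from the character-sum proof, that $x_a\mapsto\sum_{b}(-1)^{a\cdot b}x_b$ and $y_a\mapsto\sum_{b}(-1)^{a\cdot b}y_b$ are the correct substitutions, exactly as stated.

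The main obstacle, and the step deserving the most care, is precisely this bookkeeping of how the single $\FF_2^{g+1}$-transform restricts to the two independent $\FF_2^g$-transforms on the $x$- and $y$-blocks. The cleanest route avoids the embedding altogether: I would instead run the standard character-sum argument for the dual code directly on $J_{C^\perp,T}^{(g)}$. That is, I would write $\allone_{C^\perp}(v_1,\dots,v_g)=\frac{1}{|C|^g}\sum_{u_1,\dots,u_g\in C}(-1)^{\sum_j u_j\cdot v_j}$ to express membership in $C^\perp$, substitute this into the definition of $J_{C^\perp,T}^{(g)}$, and interchange the order of summation. The resulting sum over the $v_j$ factors coordinatewise over the $n$ positions, and each position contributes a local sum that evaluates to $\sum_{b\in\FF_2^g}(-1)^{a\cdot b}y_b$ for positions in $T$ and to $\sum_{b\in\FF_2^g}(-1)^{a\cdot b}x_b$ for positions outside $T$, where $a$ is the genus-$g$ pattern of $(u_1,\dots,u_g)$ at that position. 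Collecting these local contributions reproduces exactly $\frac{1}{|C|^g}J_{C,T}^{(g)}$ evaluated at the transformed variables, completing the proof. This direct approach makes the separation of the $x$- and $y$-transforms transparent, since the two coordinate groups never interact in the coordinatewise factorization.
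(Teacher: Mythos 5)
Your final argument is correct, and it is essentially the paper's own technique. The paper never writes out a proof of Theorem~\ref{Thm:JacMacGen_g} itself --- it simply cites \cite{HO}, after the variable renaming $y_{(b_1\cdots b_g\,0)}\mapsto x_{(b_1\cdots b_g)}$, $y_{(b_1\cdots b_g\,1)}\mapsto y_{(b_1\cdots b_g)}$ --- but it proves the more general split identity, Theorem~\ref{Thm:gthSplitJacobiMacWilliams}, by exactly the character-sum computation you settle on: insert $\delta_{C^\perp}(v)=\frac{1}{|C|}\sum_{u\in C}(-1)^{u\cdot v}$ for each of the $g$ codewords, interchange the order of summation, and factor the sum over $v_1,\dots,v_g$ coordinatewise, so that positions in $T$ produce $\sum_b(-1)^{a\cdot b}y_b$ and positions in $[n]\setminus T$ produce $\sum_b(-1)^{a\cdot b}x_b$; your statement is the $\ell=1$ specialization of that proof. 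Your opening detour through the $\FF_2^{g+1}$-indexed enumerator mirrors the setup of \cite{HO}, and your self-correction there is the right one: the $(g+1)$-st row is the fixed indicator vector of $T$, not a codeword summed over $C$, so the dualizing transform acts only on the $g$ code rows and never mixes the $\varepsilon=0$ and $\varepsilon=1$ blocks --- the naive full $\FF_2^{g+1}$ Hadamard transform you first wrote down would indeed give the wrong identity, which is precisely why the $x$- and $y$-variables transform separately in the statement. In short, the direct character-sum route you chose is self-contained, matches the paper's method for the split case, and avoids the bookkeeping pitfall of the embedding.
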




Let $C$ be a code of length~$n$.
Then the code $C-i$ (resp. $C/i$) obtained from~$C$ by \emph{puncturing} (resp. \emph{shortening}) at coordinate place~$i$. 
We denote by $C+i_{a}$ for $a \in \FF_{2}$ the subsets of $C$ where the
$i$-th entry of each codeword takes the value $a$ punctured at $i$.
We say $C-i$ \textit{unique} if no matter $i$-coordinate gives the same $C-i$.
A code is said to be $t-$\emph{homogeneous} if the codewords of every given
weight hold a~$t-$design. In particular, we call a code \emph{homogeneous}
instead of $t-$homogenous when $t =1$.

\begin{lem} \label{Lem:JWZ}
	Let $C$ be a code of length~$n$. 
	Then for every coordinate place~$i$, we have
	\begin{equation}\label{Equ:Target_1}
		J_{C,\{i\}}^{(g)}
		=
		\sum_{a = (a_{1},\ldots,a_{g}) \in \FF_{2}^{g}}
		y_{a}
		Z_{C+i_{a_{1}},\ldots,C+i_{a_{g}}},
	\end{equation}
	\begin{equation}\label{Equ:Target_2}
		W_{C-i}^{(g)} 
		= 
		\sum_{a = (a_{1},\ldots,a_{n})\in \FF_{2}^{g}}
		Z_{C+i_{a_{1}},\ldots,C+i_{a_{g}}},
	\end{equation}
	where 
	\[
	Z_{C+i_{a_{1}},\ldots,C+i_{a_{g}}}
	:=
	\sum_{u_{1}\in C+i_{a_{1}},\ldots,u_{g} \in C+i_{a_{g}}}
	\prod_{b\in \FF_{2}^{g}}
	x_{b}^{n_{b}(u_{1},\ldots,u_{g})}.
	\]
\end{lem}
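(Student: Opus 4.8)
The plan is to prove both identities by unfolding the defining sums and regrouping the outer summation over $C^{g}$ according to the value taken at the single distinguished coordinate $i$. For a $g$-tuple $(u_{1},\ldots,u_{g})$ of codewords, record its $i$-th column $a := (u_{1i},\ldots,u_{gi}) \in \FF_{2}^{g}$. The assignment $(u_{1},\ldots,u_{g}) \mapsto a$ partitions $C^{g}$ into $2^{g}$ classes, one for each $a \in \FF_{2}^{g}$; inside the class labelled $a$ the $k$-th codeword has its $i$-th entry fixed to $a_{k}$, so after puncturing at $i$ it is precisely an element of $C+i_{a_{k}}$. Both $W_{C-i}^{(g)}$ and $J_{C,\{i\}}^{(g)}$ are sums of monomials indexed by such tuples, and this partition is the engine of the proof.

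For \eqref{Equ:Target_1} I would start from Definition~\ref{gen_jacobi} with $T=\{i\}$. Because $T$ is a single coordinate, for any tuple exactly one exponent $m_{b}(u_{1},\ldots,u_{g})$ equals $1$, namely $b=a$, while all others vanish; hence the factor $\prod_{b}y_{b}^{m_{b}}$ collapses to the single variable $y_{a}$. The surviving factor $\prod_{b}x_{b}^{n_{b}}$ involves only the coordinates in $[n]\setminus\{i\}$, so it depends solely on the punctured codewords. Regrouping the sum by $a$ then gives $J_{C,\{i\}}^{(g)}=\sum_{a\in\FF_{2}^{g}}y_{a}\sum_{u_{k}\in C+i_{a_{k}}}\prod_{b}x_{b}^{n_{b}}$, and the inner sum is by definition $Z_{C+i_{a_{1}},\ldots,C+i_{a_{g}}}$, which is exactly \eqref{Equ:Target_1}.

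The identity \eqref{Equ:Target_2} follows from the same regrouping applied to $W_{C-i}^{(g)}$, which carries only $x$-variables: the column value $a$ is now used merely to sort the tuples, no $y_{a}$ factor is produced, and one reads off $W_{C-i}^{(g)}=\sum_{a\in\FF_{2}^{g}}Z_{C+i_{a_{1}},\ldots,C+i_{a_{g}}}$.

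The one delicate point — and the only genuine obstacle — is the bookkeeping of the puncturing, i.e.\ making sure that summing over $C^{g}$ grouped by $i$-th column really matches summing over $(C-i)^{g}$. Within a single block the map ``puncture at~$i$'' is injective, since two codewords with the same $i$-th entry that agree off $i$ must coincide, so each $C+i_{a_{k}}$ is in bijection with the set of codewords whose $i$-th entry is $a_{k}$. The subtlety is whether the blocks for $a_{k}=0$ and $a_{k}=1$ can land on the same punctured vector; this happens exactly when the weight-one vector $e_{i}$ (with a single $1$ in position $i$) lies in $C$, in which case the two blocks coincide and the right-hand sides acquire a spurious factor of $2^{g}$. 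Under the standing assumption that puncturing at $i$ is injective — equivalently $e_{i}\notin C$, which holds for the self-dual codes relevant here — the disjoint union over $a_{k}$ recovers $C-i$ faithfully and the regrouping is exact. I expect this verification, rather than the elementary algebra, to be the place where the argument must be phrased with care.
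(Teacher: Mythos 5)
Your proposal is correct and is, in substance, the paper's own proof: the paper disposes of this lemma with the single sentence that both statements ``can be shown immediately from the respective definitions,'' and the verification it alludes to is exactly your regrouping of $C^{g}$ by the $i$-th column together with the within-block puncturing bijection. One correction to your closing paragraph, though: the degeneracy $e_{i}\in C$ affects only \eqref{Equ:Target_2}, not both right-hand sides. Identity \eqref{Equ:Target_1} holds unconditionally, because there the index $a$ records the $i$-th entries of the \emph{original} codewords, so every tuple in $C^{g}$ lies in exactly one block, and puncturing restricted to $\{u\in C \mid u_{i}=a_{k}\}$ is a bijection onto $C+i_{a_{k}}$ whether or not $e_{i}\in C$; in the degenerate case both sides of \eqref{Equ:Target_1} simply equal $\bigl(\sum_{a\in\FF_{2}^{g}} y_{a}\bigr)\,W_{C-i}^{(g)}$. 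For \eqref{Equ:Target_2} your caveat is genuine: if $e_{i}\in C$ then $C+i_{0}=C+i_{1}=C-i$, the right-hand side collapses to $2^{g}\,W_{C-i}^{(g)}$, and the identity fails, so the lemma as printed tacitly needs $e_{i}\notin C$ (harmless in the paper's applications to self-dual codes, and implicitly echoed by the ``$C-i$ unique'' hypothesis of the following lemma, but never stated here). Flagging that hypothesis is a refinement the paper omits.
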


\begin{proof}
	Statements $(\ref{Equ:Target_1})$ and $(\ref{Equ:Target_2})$
	can be shown immediately from the respective definitions.
\end{proof}

\begin{thm}\label{Thm:gthJacPolythomo}
	Let $C$ be a code of length~$n$.
	If $C$ is $t-$homogeneous,
	then the genus~$g$ Jacobi polynomial~$J_{C,T}^{(g)}$
	is independent in the choice of~$T \subseteq [n]$
	for~$|T| = t$.
\end{thm}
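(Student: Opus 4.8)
The plan is to prove the statement coefficient by coefficient: I will show that, for $|T|=t$, every coefficient of $J_{C,T}^{(g)}$ is the same for all $t$-subsets $T\subseteq[n]$. Fix a monomial $\prod_{a\in\FF_2^g} y_a^{m_a} x_a^{n_a}$ with $\sum_a m_a = t$ and $\sum_a n_a = n-t$. Its coefficient counts the tuples $U=(u_1,\dots,u_g)\in C^g$ whose induced column map $\chi_U\colon[n]\to\FF_2^g$, $i\mapsto(u_{1i},\dots,u_{gi})$, has composition $(m_a)$ on $T$ and $(n_a)$ on $[n]\setminus T$. Writing $S_a(U):=\chi_U^{-1}(a)$, the full composition $(|S_a(U)|)_a=(m_a+n_a)_a$ depends only on $U$, so the coefficient equals the number of $U$ of a fixed genus-$g$ composition whose colour classes meet $T$ in the pattern $(m_a)$. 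Thus it suffices to prove that this refined intersection count is independent of the choice of $T$.

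The concrete engine will be a slice-size lemma: for $|T|=t$ and any $\psi\in\FF_2^{T}$, the number of codewords of $C$ agreeing with $\psi$ on $T$ depends only on $\wt(\psi)$ (and on $t$), not on $T$ or on $\psi$ otherwise. This follows from $t$-homogeneity, since the codewords of each fixed weight $k$ form a $t$-design, a $t$-design is an $s$-design for every $s\le t$, and consequently the number of its blocks that contain a prescribed $s$-set and avoid a prescribed disjoint $(t-s)$-set is a constant depending only on $s$, $t-s$ and $k$; summing over $k$ gives the claim. Because fixing all $g$ coordinate values on $T$ decouples the $g$ codewords, one has $\#\{U:\chi_U|_T=\phi\}=\prod_{j=1}^{g}\#\{u\in C: u|_T=\phi^{(j)}\}$ for any colouring $\phi\colon T\to\FF_2^g$, where $\phi^{(j)}$ is its $j$-th coordinate. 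The slice-size lemma makes each factor depend only on $\wt(\phi^{(j)})=\sum_{a:\,a_j=1}m_a$, hence only on the composition $(m_a)$; summing over the $\binom{t}{(m_a)_a}$ colourings of $T$ of that composition then yields a closed multinomial expression that is manifestly $T$-independent. This already settles the statement for the $y$-variables, that is, for the specialization $x_a=1$.

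The remaining, and main, difficulty is the coupling between the part of the monomial supported on $T$ (the $y_a$) and the complementary part (the $x_a$): fixing the colouring of $T$ pins down the columns on $T$ but leaves the joint genus-$g$ composition on $[n]\setminus T$ free, and that composition is not a coordinatewise condition, so the clean factorization above no longer applies. To overcome this I would argue by induction on the genus $g$, taking as base case the genus-$1$ statement already recorded in Section~\ref{Sec:Preli} (that $J_{C,T}$ is $T$-independent for $t$-homogeneous $C$). For the inductive step, fix the last codeword $u_g=v\in C$; the column at a place $i$ becomes $((u_{1i},\dots,u_{g-1,i}),v_i)$, so the inner sum over $(u_1,\dots,u_{g-1})$ is a genus-$(g-1)$ Jacobi polynomial of $C$ attached to $T$ but \emph{split} according to the partition $[n]=\{i:v_i=0\}\sqcup\supp(v)$, with $T$ split accordingly; one then sums over $v$, grouping by $\wt(v)=k$ and using that the weight-$k$ codewords form a $t$-design. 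The crux is exactly showing that this split genus-$(g-1)$ contribution, once summed over the design of codewords of each weight, is independent of $T$: this is where $t$-homogeneity is used in full strength rather than only through the slice-size lemma, it is most naturally phrased via the split Jacobi polynomials, and I expect it to be the main obstacle and the place where the bookkeeping of the complementary ($x$-variable) composition must be controlled.
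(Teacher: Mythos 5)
Your proposal is not a complete proof: it ends by naming, rather than resolving, the decisive step. What you do establish is fine as far as it goes. The coefficient-by-coefficient reduction, the slice-size lemma (a $t$-design is an $s$-design for $s\le t$, so by inclusion--exclusion the number of blocks meeting a fixed $t$-set in a prescribed subset depends only on the size of that subset), and the row-wise decoupling $\#\{U:\chi_U|_T=\phi\}=\prod_{j}\#\{u\in C:u|_T=\phi^{(j)}\}$ together settle the specialization $x_a=1$. But the theorem concerns the joint statistic: the number of tuples with prescribed composition $(m_a)$ on $T$ \emph{and} prescribed composition $(n_a)$ on $[n]\setminus T$, and the slice-size lemma says nothing about how the complement composition correlates with the colouring of $T$. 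Your third paragraph proposes an induction on $g$ to handle this coupling and then explicitly concedes that the inductive step --- showing that the split genus-$(g-1)$ contribution, summed over codewords $v$ of fixed weight, is $T$-independent --- is ``the main obstacle.'' That step is where the entire difficulty lives: after fixing $u_g=v$, the inner sum is a split Jacobi polynomial for the partition $[n]=\{i:v_i=0\}\sqcup\supp(v)$, and controlling how $T$ distributes across this partition as $v$ runs over a design amounts to a Cameron-type generalized design property ($\ell$-th $t$-homogeneity), which is strictly stronger than the plain $t$-homogeneity you are given; indeed the paper's own split results (Theorems~\ref{Thm:Main1} and~\ref{Thm:SplitMain2}) assume exactly that stronger hypothesis. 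Nothing in your sketch bridges this, so the induction does not close.

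For comparison, the paper's proof of Theorem~\ref{Thm:gthJacPolythomo} avoids genus induction altogether: it views a $g$-tuple of codewords as a $g\times n$ matrix $\widetilde{c}$ with columns $c_i\in\FF_2^g$, defines for each colour $a\in\FF_2^g$ and each $\ell$ the family $\mathcal{B}(C^{(g)}_{a,\ell})$ of supports $\supp_a(\widetilde{c})=\{i:c_i=a\}$ taken over matrices with $n_a(\widetilde{c})=\ell$, argues that $t$-homogeneity of $C$ forces each such family to be a $t$-design, and concludes that $C^g$ itself is $t$-homogeneous (as a code over the alphabet $\FF_2^g$), whence the $T$-independence of $J^{(g)}_{C,T}$. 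In other words, the paper lifts the design property to the colour classes of $C^g$ in one step; that lifting is precisely the multi-colour joint statement which your slice-size lemma does not reach and your induction postpones. (The paper's write-up of the lifting is itself terse, but it is the identified engine of the argument, and your proposal supplies no equivalent.) To repair your approach you would need either to prove such a lifting directly, or to bypass the issue by deriving the polarization identity of Theorem~\ref{Thm:t-homo_g} first, whose right-hand side $\frac{1}{n(n-1)\cdots(n-t+1)}A_{(g)}^{t}W_C^{(g)}$ is manifestly independent of $T$; as written, your proposal does neither.
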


\begin{proof}
	Let $C$ be a code of length~$n$
	and 
	$C^{g} :=\underset{g \text{ times }}{\underbrace{C \times \cdots \times C}}$.
	We denote an element of $C^g$ by 
	\[
		\widetilde{c} := 
		(c_1,\ldots,c_n):=
			\begin{pmatrix}
				u_{11}&\ldots&u_{1n}\\
				u_{21}&\ldots&u_{2n}\\
				\vdots&\cdots&\vdots\\
				u_{g1}&\ldots&u_{gn}
			\end{pmatrix},
	\]
	where 
	$c_i:=(u_{1i},\ldots,u_{gi})^{t}\in \FF_{2}^{g}$. 
	Now for any $a \in \FF_{2}^{g}$
	and non-negative integer~$\ell$,
	we define
	\begin{align*}
		\supp_{a}(\widetilde{c})
		& :=
		\{
			i \in [n]
			\mid
			c_{i} = a
		\},\\
		C_{a,\ell}^{(g)}
		& :=
		\{
			\widetilde{c} \in C^{g}
			\mid
			n_{a}(\widetilde{c}) = \ell
		\},\\
		\mathcal{B}(C_{a,\ell}^{(g)})
		& :=
		\{
			\supp_{a}(\widetilde{c})
			\mid
			\widetilde{c}  \in C_{a,\ell}^{(g)}
		\},
	\end{align*}
	where, $n_{a}(\widetilde{c})$  is the number of 
	columns $i$ in $\widetilde{c}$ such that
	$a = c_{i}$.
	Since $C$ is $t$-homogeneous, therefore
	$\mathcal{B}(C_{a,\ell}^{(g)})$ forms a $t$-design
	for all given~$a \in \FF_{2}^{g}$ and $\ell$. 
	Hence, $C^{g}$ is also $t$-homogeneous.
	This conclude that $J_{C,T}^{(g)}$
	is independent in the choice of $T \subseteq [n]$
	for $|T|=t$.
\end{proof}

Now we introduce a higher genus generalization of 
the polarization operator given in~\cite{BMS}.
Let $f$ be a homogeneous polynomial in~$x_a$ 
and~$y_{a}$ for $a \in \mathbb{F}_2^g$.
We define the polarization operator $A_{(g)}$ in genus~$g$ 
for the homogeneous polynomial $f$ as
\[
	A_{(g)} . f 
	= 
	\left( 
	\sum_{a\in \mathbb{F}_2^g} 
	y_a 
	\frac{\partial }{\partial x_a}
	\right)f.
\]

Now we the following useful lemma.

\begin{lem} \label{Lem:1-homo}
	Let $C$ be a code of length~$n$. 
	If $C-i$ is unique for any $i$-th coordinate, then
	\[
		W_{C-i}^{(g)}
		= 
		\frac{1}{n} 
		\left( 
		\sum_{a \in \mathbb{F}_2^g} 
		\frac{\partial}{\partial x_a} 
		W_C^{(g)}
		\right).
	\]
\end{lem}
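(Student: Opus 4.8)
The plan is to apply the operator $\sum_{a \in \FF_{2}^{g}} \partial/\partial x_a$ directly to the monomials of $W_C^{(g)}$ and to reinterpret the result as a sum of punctured weight enumerators. Writing $W_C^{(g)} = \sum_{\widetilde{c}} \prod_{b \in \FF_{2}^{g}} x_b^{n_b(\widetilde{c})}$, where $\widetilde{c}$ ranges over all $g$-tuples $(u_1,\ldots,u_g) \in C^g$ viewed as $g \times n$ matrices with columns $c_i$ (as in the proof of Theorem~\ref{Thm:gthJacPolythomo}), I would first record the elementary identity $\sum_{a} \partial_{x_a} \prod_b x_b^{n_b} = \sum_{a} n_a \prod_b x_b^{n_b}/x_a$. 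The factor $n_a$ counts the columns of $\widetilde{c}$ equal to $a$, so this expression is exactly the sum, over all $n$ columns $i$, of the monomial obtained by deleting the $i$-th column of $\widetilde{c}$.

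The key observation is that deleting column $i$ from $\widetilde{c}$ produces the matrix associated with the tuple obtained by puncturing each $u_j$ at position $i$, an element of $(C-i)^g$. Hence
\[
	\sum_{a \in \FF_{2}^{g}} \frac{\partial}{\partial x_a}\, W_C^{(g)}
	=
	\sum_{i=1}^{n} \sum_{\widetilde{c} \in C^g}
	\prod_{b \in \FF_{2}^{g}} x_b^{\,n_b(\widetilde{c}^{(i)})},
\]
where $\widetilde{c}^{(i)}$ denotes $\widetilde{c}$ with its $i$-th column removed. For each fixed $i$ I would identify the inner sum with $W_{C-i}^{(g)}$: since the puncturing map $C \to C-i$ is a bijection whenever the $i$-th standard basis vector $e_i \notin C$ (which holds in the self-dual, and more generally the even, setting of interest, so no two codewords of $C$ agree off position $i$), summing over $\widetilde{c} \in C^g$ is the same as summing over $(C-i)^g$, and the displayed inner sum is precisely the genus~$g$ weight enumerator of $C-i$.

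Finally I would invoke the hypothesis. The assumption that $C-i$ is unique means $W_{C-i}^{(g)}$ does not depend on the deleted coordinate, so the right-hand side collapses to $n\, W_{C-i}^{(g)}$, and dividing by $n$ yields the claim. The main obstacle is the bookkeeping in the first two steps, namely verifying that applying $\sum_{a} \partial_{x_a}$ and re-summing over columns faithfully reproduces a sum of punctured enumerators, and in particular checking that the puncturing map is injective so that $W_{C-i}^{(g)}$ appears with coefficient one rather than with a fiber-size factor $2^g$ that would arise if $e_i \in C$. Once this identification is secured, the uniqueness hypothesis makes the conclusion immediate.
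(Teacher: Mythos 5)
Your proof is correct and is essentially the paper's argument: both read $\sum_{a\in\FF_2^g}\partial/\partial x_a$ applied to a monomial of $W_C^{(g)}$ as the sum over the $n$ columns of that monomial with one column deleted, and both then trade the resulting sum over coordinates $i$ for a factor of $n$ using the uniqueness of $C-i$. The only organizational difference is that the paper routes the computation through the quantities $Z_{C+i_{a_1},\ldots,C+i_{a_g}}$ of Lemma~\ref{Lem:JWZ} and the coefficients $A(k_{a_1},\ldots,k_{a_{2^g}})$, proving the per-$a$ identity $\sum_{i}Z_{C+i_{a_1},\ldots,C+i_{a_g}}=\partial W_C^{(g)}/\partial x_a$ and then asserting that each individual $Z_{C+i_{a_1},\ldots,C+i_{a_g}}$ is independent of $i$ (a finer claim than uniqueness of $C-i$ directly gives, though it is the form reused in Theorem~\ref{Thm:1-homo_g}); you instead work with the total operator and invoke uniqueness only at the level of $W_{C-i}^{(g)}$, which depends on nothing but the code $C-i$, so your use of the hypothesis is the more cleanly justified one. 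Finally, the fiber-size caveat you flag is genuine and is silently present in the paper as well, hidden in identity~(\ref{Equ:Target_2}) of Lemma~\ref{Lem:JWZ}, whose proof is declared immediate: if $e_i\in C$ the column-deleted sum equals $2^g\,W_{C-i}^{(g)}$, and the lemma as stated is then false --- e.g.\ for $C=\FF_2^n$ the code $C-i$ is unique yet the right-hand side is $2^g W_{C-i}^{(g)}$. So your added evenness assumption (which forces $e_i\notin C$) is not a defect of your proof but a repair of the statement, and it costs nothing in the paper's intended application to Type~$\II$ codes.
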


\begin{proof}
	Let $A({k_{a_1},\ldots,k_{a_{2^g}}})$ 
	where $a_{j} \in \mathbb{F}_2^g$ for all $j$ be the number of $c \in C^g$ such that
	
	\[
	n_{a_{j}}(c)=k_{a_{j}}.
	\]
	In other words, 
	$A({k_{a_1},\ldots,k_{a_{2^g}}})$
	is the coefficient of the monomial 
	$$x_{a_{1}}^{k_{a_{1}}}\cdots x_{a_{2^{g}}}^{k_{a_{2^{g}}}}.$$
	Let $B_{n}$ be the set of $2^{g}$-tuple 
	$(k_{a_{1}},\ldots,k_{a_{2^g}})$ in some fixed order
	such that 
	$a_{j} \in \FF_{2}^{g}$ for all~$j$ and
	$\sum_{j=1}^{2^{g}} k_{a_{j}} = n$.
	Then for any $a_{j} \in \FF_{2}^{g}$, we have
	
	\begin{align*}
		\sum_{i=1}^{n}
		Z&_{C+i_{a_{j1}},\ldots,C+i_{a_{jg}}}\\
		&= 
		\sum_{({k_{a_{1}},\ldots, k_{a_{j-1}},k_{a_{j}},k_{a_{j+1}},\ldots,k_{a_{2^g}}}) \in B_{n}}  
		k_{a_{j}} 
		A({k_{a_{1}},\ldots, k_{a_{j-1}},k_{a_{j}},k_{a_{j+1}},\ldots,k_{a_{2^g}}}) \\ 
		&\hspace{60pt}
		x_{a_{1}}^{k_{a_{1}}}
		\cdots
		x_{a_{j-1}}^{k_{a_{j-1}}}
		x_{a_j}^{k_{a_j} -1}
		x_{a_{j+1}}^{k_{a_{j+1}}}
		\cdots
		x_{a_{2^g}}^{k_{a_{2^g}}} \\
		& = 
		\frac{\partial }{\partial x_{a_{j}}} 
		W_C^{(g)}.
	\end{align*}
	
	
	Because no matter $i$ gives the same $C-i$, 
	therefore
	$Z_{C+i_{a_1},\ldots,C+i_{a_{g}}}$
	for $a = (a_{1},\ldots,a_{g}) \in \FF_{2}^{g}$
	can be obtained as follows
	\begin{align*}
		Z_{C+i_{a_{1}},\ldots,C+i_{a_{g}}}
		& = 
		\frac{1}{n}
		\frac{\partial }{\partial x_{a}} 
		W_C^{(g)}
	\end{align*}
	Then by Lemma~\ref{Lem:JWZ}, we have
	\[
		W_{C-i}^{(g)} 
		= 
		\frac{1}{n} 
		\sum_{a \in  \mathbb{F}_2^g} 
		\frac{\partial }{\partial x_a} 
		W_C^{(g)}.
	\]
	This completes the proof.
\end{proof}

In the situation that is discussed in Theorem~\ref{Thm:gthJacPolythomo}, 
we denote the genus $g$ Jacobi polynomial as $J_{C,t}^{(g)}$.
The following theorems are the generalization of \cite[Theorems 3, 4]{BMS}

\begin{thm}\label{Thm:1-homo_g}
If $C$ is $1-$homogeneous, then 
we have
\[
	J_{C,1}^{(g)} 
	= 
	\frac{1}{n} A_{(g)} W_C^{(g)}.
\]
\end{thm}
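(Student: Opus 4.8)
The plan is to assemble the identity from the two lemmas already at hand, using the hypothesis that $C$ is $1$-homogeneous to translate it into the ``$C-i$ is unique'' condition required by Lemma~\ref{Lem:1-homo}. First I would start from the expansion of the genus~$g$ Jacobi polynomial attached to a single coordinate. By Theorem~\ref{Thm:gthJacPolythomo}, since $C$ is $1$-homogeneous the polynomial $J_{C,T}^{(g)}$ does not depend on the choice of $T$ with $|T|=1$, so I may write $J_{C,1}^{(g)}$ unambiguously and evaluate it at any single coordinate place~$i$. Equation~(\ref{Equ:Target_1}) of Lemma~\ref{Lem:JWZ} then gives
\[
	J_{C,1}^{(g)}
	=
	\sum_{a \in \FF_{2}^{g}}
	y_{a}\, Z_{C+i_{a_{1}},\ldots,C+i_{a_{g}}} .
\]

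The crucial intermediate step is the formula from the proof of Lemma~\ref{Lem:1-homo}, namely that when $C-i$ is unique one has
\[
	Z_{C+i_{a_{1}},\ldots,C+i_{a_{g}}}
	=
	\frac{1}{n}\,
	\frac{\partial}{\partial x_{a}}
	W_{C}^{(g)}
\]
for each $a=(a_{1},\ldots,a_{g}) \in \FF_{2}^{g}$. I would therefore substitute this into the display above and factor out the polarization operator, obtaining
\[
	J_{C,1}^{(g)}
	=
	\sum_{a \in \FF_{2}^{g}}
	y_{a}\,
	\frac{1}{n}\,
	\frac{\partial}{\partial x_{a}}
	W_{C}^{(g)}
	=
	\frac{1}{n}
	\left(
	\sum_{a \in \FF_{2}^{g}}
	y_{a}\,
	\frac{\partial}{\partial x_{a}}
	\right)
	W_{C}^{(g)}
	=
	\frac{1}{n}\, A_{(g)} W_{C}^{(g)},
\]
which is exactly the claimed identity once the bracketed sum is recognized as the definition of $A_{(g)}$.

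The step I expect to be the main obstacle, and the one I would argue most carefully, is justifying that the $1$-homogeneity of $C$ delivers precisely the hypothesis ``$C-i$ is unique'' that Lemma~\ref{Lem:1-homo} requires. Here I would note that $1$-homogeneity means the codewords of each fixed weight form a $1$-design on the coordinate set $[n]$, so every coordinate lies in the support of the same number of codewords of each given weight; consequently the weight distribution of the punctured code $C-i$ is independent of~$i$, and more strongly the relevant per-coordinate counts $Z_{C+i_{a_{1}},\ldots,C+i_{a_{g}}}$ coincide for all~$i$, which is what lets one replace the single-$i$ term by the $n$-averaged derivative. I would make explicit that this is the same averaging argument used inside Lemma~\ref{Lem:1-homo}, applied now to each $a \in \FF_{2}^{g}$ separately rather than to the full sum, so that the bookkeeping of the genus~$g$ monomials $\prod_{b} x_{b}^{n_{b}}$ carries through verbatim. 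The remaining manipulations are routine linearity of differentiation and rearrangement of a finite sum, requiring no further ideas.
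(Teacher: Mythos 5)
Your proposal is correct and takes essentially the same route as the paper, whose entire proof is the single line ``Apply Lemma~\ref{Lem:1-homo}'': you have simply made explicit what that line leaves implicit, namely combining equation~(\ref{Equ:Target_1}) of Lemma~\ref{Lem:JWZ} with the per-$a$ identity $Z_{C+i_{a_1},\ldots,C+i_{a_g}} = \tfrac{1}{n}\,\partial W_C^{(g)}/\partial x_a$ extracted from the proof of Lemma~\ref{Lem:1-homo}. One minor refinement: the independence of $Z_{C+i_{a_1},\ldots,C+i_{a_g}}$ from $i$ is most cleanly justified not by the weight-distribution heuristic (which concerns single codewords, whereas the genus-$g$ counts involve $g$-tuples) but by invoking Theorem~\ref{Thm:gthJacPolythomo} for $t=1$, which you already cite, and comparing coefficients of the independent variables $y_a$ in~(\ref{Equ:Target_1}).
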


\begin{proof}
Apply Lemma~\ref{Lem:1-homo}. Then we get the relation. 
\end{proof}

Theorem~\ref{Thm:1-homo_g} has a meaning such the following. 
Let $W_C^{(g)}$ be the weight enumerator of a code $C$.
If $W_{C-i}^{(g)}$ is the same for all coordinate~$i$,
then $J_{C,i}$ can be obtained by averaging the summation of the product of $y_a$ and 
\[
	\frac{\partial}{\partial x_a} 
	W_C^{(g)}
\]
for $a \in \mathbb{F}_2^g$.
In other words, 
in this situation we can say that $J_{C,1}^{(g)}$ can be obtained by deleting coordinate $i$ on $[n]$ and moving it to another set which is still empty before $i$ is added.

\begin{thm} \label{Thm:t-homo_g}
Let $C$ be a $t-$homogeneous code. 
Then for all $T \subseteq [n]$ with $|T|=t$,
we have
\[
	J_{C,T}^{(g)} 
	= 
	\frac{1}{n(n-1)\ldots (n-t+1)} 
	A_{(g)}^t W_C^{(g)}.
\]
\end{thm}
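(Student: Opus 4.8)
The plan is to argue by induction on $t$, the engine being a single combinatorial identity describing how the polarization operator $A_{(g)}$ enlarges the attaching set~$T$. Concretely, I would first prove that for \emph{any} code~$C$ and any $T\subseteq[n]$,
\[
	A_{(g)}\, J_{C,T}^{(g)} = \sum_{j\in[n]\setminus T} J_{C,\,T\cup\{j\}}^{(g)}.
\]
This says that one application of $A_{(g)}$ is the same as summing, over all coordinates~$j$ outside~$T$, the Jacobi polynomials attached to $T\cup\{j\}$. Note that no homogeneity is needed for this step; it is a formal identity between polynomials.

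To establish the identity I would compute on a single monomial. Fixing $\widetilde c=(u_1,\dots,u_g)\in C^g$ with column multiplicities $m_b$ on~$T$ and $n_b$ on $[n]\setminus T$, its contribution to $J_{C,T}^{(g)}$ is $\prod_{b\in\FF_2^g} y_b^{m_b} x_b^{n_b}$. Applying $\sum_{a} y_a\,\partial/\partial x_a$ produces $\sum_a n_a\prod_b y_b^{m_b+[b=a]} x_b^{n_b-[b=a]}$, where $[\,\cdot\,]$ is the Iverson bracket. The summand indexed by~$a$ is exactly the monomial obtained by transferring one coordinate of column-type~$a$ from $[n]\setminus T$ into~$T$, and the prefactor $n_a$ counts the coordinates $j\in[n]\setminus T$ of that type. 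Summing over~$a$ and then over~$\widetilde c$ reproduces $\sum_{j\in[n]\setminus T} J_{C,T\cup\{j\}}^{(g)}$, proving the identity.

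Next I would feed in the hypothesis. A $t$-homogeneous code is $s$-homogeneous for every $s\le t$, since each weight class holds a $t$-design and hence an $s$-design; so by Theorem~\ref{Thm:gthJacPolythomo} the polynomial $J_{C,T}^{(g)}$ depends only on $|T|=s$ whenever $s\le t$, and I write $J_{C,s}^{(g)}$ for this common value. For $0\le s\le t-1$ every set $T\cup\{j\}$ appearing above has size $s+1\le t$, so all $n-s$ summands equal $J_{C,s+1}^{(g)}$ and the identity collapses to the recursion
\[
	J_{C,s+1}^{(g)}=\frac{1}{n-s}\,A_{(g)} J_{C,s}^{(g)}.
\]
Starting from $J_{C,0}^{(g)}=W_C^{(g)}$ (Remark~\ref{rem:relation_on_Jacobi}(i)) and iterating, the telescoping product of denominators $n(n-1)\cdots(n-t+1)$ together with the $t$-fold composite $A_{(g)}^t$ yields the claimed formula; the $s=0$ step recovers Theorem~\ref{Thm:1-homo_g}.

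The main obstacle is not the monomial computation, which is routine, but making precise that the homogeneity assumption is exactly what is needed to replace the sum $\sum_{j}J_{C,T\cup\{j\}}^{(g)}$ by $(n-s)\,J_{C,s+1}^{(g)}$. One must verify that the degree of homogeneity invoked never exceeds~$t$ along the induction, the worst case being the final step $s=t-1\to t$, so that Theorem~\ref{Thm:gthJacPolythomo} legitimately applies at every stage.
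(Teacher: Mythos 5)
Your proposal is correct and follows essentially the same route as the paper: the paper's proof is an induction on $|T|$ whose inductive step is exactly your recursion $J_{C,k+1}^{(g)}=\frac{1}{n-k}A_{(g)}J_{C,k}^{(g)}$, justified there informally as ``deleting a coordinate $i$ on $[n]\setminus[k]$ and moving it to $[k]$.'' Your only addition is to make that step rigorous via the formal identity $A_{(g)}J_{C,T}^{(g)}=\sum_{j\in[n]\setminus T}J_{C,T\cup\{j\}}^{(g)}$ and the observation that $t$-homogeneity implies $s$-homogeneity for $s\le t$, both of which the paper uses implicitly but does not spell out.
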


\begin{proof}
We prove by induction. 
For $k<t$, we assume
\begin{equation*} \label{eq_prgenjac}
J_{C,k}^{(g)}= \frac{1}{n(n-1)\ldots (n-k+1)} A_{(g)}^k W_C^{(g)}.
\end{equation*}
By deleting a coordinate $i$ on $[n]\backslash [k]$ and move the coordinate $i$ to $[k]$,
we have that
\begin{align*}
J_{C,k+1} = & \frac{1}{n-k} \sum_{a \in \mathbb{F}_2^g} y_a \frac{\partial}{\partial x_a} J_{C,k} \\
= & \frac{1}{n-k} \left( A_{(g)} \frac{1}{n(n-1) \cdots (n-k+1)} A_{(g)}^k W_C^{(g)} \right)\\
= & \frac{1}{n(n-1) \cdots (n-k+1)(n-k)} A_{(g)}^{k+1} W_C^{(g)}.
\end{align*}
\end{proof}

\section{Split Jacobi polynomials}\label{Sec:SplitJacobiPoly}

In this section, we give the MacWilliams type identity for
the genus~$g$ Jacobi polynomial of a code attached to multiple sets 
of coordinate places.
Moreover, we give a split generalization of the polarization operation $A_{(g)}$, 
and obtain a split analogue of Theorems~\ref{Thm:1-homo_g} and \ref{Thm:t-homo_g}.

\begin{df}\label{Def:SplitCompWeight}
	Let $C$ be a code of length~$n$. Then 
	the genus~$g$ \emph{split weight enumerator} attached to $\ell$ mutually
	disjoint subsets $X_{1},\ldots,X_{\ell}$ of 
	coordinate places of the code $C$ such that 
	$$X_{1} \sqcup \cdots \sqcup X_{\ell} = [n]$$ is defined as follows:
	\[
	W_{C,X_{1},\ldots,X_{\ell}}^{(g)}
	(\{\{x_{i,a}\}_{a\in\FF_{2}^{g}}\}_{1\leq i \leq \ell})
	:=
	\sum_{u_{1},\ldots,u_{g} \in C}
	\prod_{i = 1}^{\ell}
	\prod_{a \in \FF_{2}^{g}}
	x_{i,a}^{n_{a,X_{i}}(u_{1},\ldots,u_{g})}.
	\]
	where $n_{a,X}(u_{1},\ldots,u_{g})$ 
	is the number of~$i$ such that $a = (u_{1i},\ldots,u_{gi}) \in \FF_{2}^{g}$ 
	on $X$.
\end{df}

\begin{df}\label{Def:SplitJacobi}
	Let $C$ be a code of length~$n$.
	Let $X_{1},\ldots, X_{\ell}$ be $\ell$ mutually
	disjoint sets such that 
	$$[n] = X_{1} \sqcup \cdots \sqcup X_{\ell}.$$ 
	Then the \emph{split Jacobi polynomial} of $C$ 
	attached to $T_{1},\ldots,T_{\ell}$ such that 
	$T_{i} \subseteq X_{i}$ for all $i$
	is defined by
	\begin{align*}
		J_{C,X_{1}(T_{1}),\ldots,X_{\ell}(T_{\ell})}
		&(\{w_{i},z_{i},x_{i}, y_{i}\}_{1\leq i \leq \ell})\\
		& := 
		\sum_{{u}\in C} 
		\prod_{i = 1}^{\ell}
		w_{i}^{m_{0,i}(u)}
		z_{i}^{m_{1,i}(u)}
		x_{i}^{n_{0,i}({u})}
		y_{i}^{n_{1,i}({u})},
	\end{align*}
	where $m_{a,i}(u)$ (resp. $n_{a,i} (u)$) 
	is the number of~$k$ such that $a = u_{k} \in \FF_{2}$ 
	on $T_{i}$ (resp. $X_{i} \backslash T_{i}$).
\end{df}

\begin{rem}
	If $\ell=1$, then the split Jacobi polynomial is the Jacobi polynomial 
	attached to a set~$T$ of coordinate places. 
\end{rem}

Now we give an arbitrary genus~$g$ generalization of split Jacobi polynomials
that we call $g$-th split Jacobi polynomials.
 
\begin{df}  \label{Def:GenSplitJacobi}
	Let $C$ be a code of length~$n$.
	Let $X_{1},\ldots, X_{\ell}$ be $\ell$ mutually
	disjoint sets such that 
	$$[n] = X_{1} \sqcup \cdots \sqcup X_{\ell}.$$ 
	Then the \emph{$g$-th split Jacobi polynomial} of $C$ 
	attached to $T_{1},\ldots,T_{\ell}$ such that 
	$T_{i} \subseteq X_{i}$ for all $i$
	is defined by
	\begin{multline*}
		J_{C,X_{1}(T_{1}),\ldots,X_{\ell}(T_{\ell})}^{(g)} 
		(\lbrace\lbrace y_{{i},a}, x_{{i},a} \rbrace_{a \in \mathbb{F}_2^g}\rbrace_{1 \leq i \leq \ell})\\
		:= 
		\sum_{u_1,...,u_g \in C } 
		\prod_{a \in \mathbb{F}_2^g} 
		\prod_{i = 1}^{\ell}
		y_{{i},a}^{m_{a,{i}} (u_{1},\ldots,u_{g})}\, 
		x_{{i},a}^{n_{a,i} (u_{1},\ldots,u_{g})}
	\end{multline*}
	where $m_{a,i}(u_{1},\ldots,u_{g})$ (resp. $n_{a,i} (u_{1},\ldots,u_{g})$) 
	is the number of~$k$ such that $a = (u_{1k},\ldots,u_{gk}) \in \FF_{2}^{g}$ 
	on $T_{i}$ (resp. $X_{i} \backslash T_{i}$).
\end{df}

\begin{rem}
	If $\ell=1$, then the $g$-th split Jacobi polynomial is 
	the $g$-th Jacobi polynomial attached to a set~$T$ of coordinate places. 
\end{rem}

The $g$-th split Jacobi polynomial of a code 
attached to multiple disjoint sets of coordinate places 
satisfies the following MacWilliams type identity.

\begin{thm}\label{Thm:gthSplitJacobiMacWilliams}
	Let $C$ be a code of length $n$.
	Let $X_{1},\ldots, X_{\ell}$ be $\ell$ mutually
	disjoint sets such that 
	$[n] = X_{1} \sqcup \cdots \sqcup X_{\ell}.$
	Then
	\begin{align*}
		J&_{C^{\perp},X_{1}(T_{1}),\ldots,X_{\ell}(T_{\ell})}^{(g)} 
		(\{\{y_{i,a},x_{i,a}\}_{a \in \FF_{2}^{g}}\}_{1\leq i\leq \ell})\\
		& = 
		\dfrac{1}{|C|^{g}} 
		J_{C,X_{1}(T_{1}),\ldots,X_{\ell}(T_{\ell})}^{(g)}
		\left(
		\left\{
		\left\{
		\sum_{b \in \FF_{2}^{g}}
		(-1)^{a \cdot b}
		y_{i,b},
		\sum_{b \in \FF_{2}^{g}}
		(-1)^{a \cdot b}
		x_{i,b}
		\right\}_{a \in \FF_{2}^{g}}
		\right\}_{1\leq i \leq \ell}
		\right).
	\end{align*}
\end{thm}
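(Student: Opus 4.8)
The plan is to prove the split MacWilliams-type identity by reducing it to the already-established genus $g$ MacWilliams identity of Theorem~\ref{Thm:JacMacGen_g} via a change of variables. The key observation is that the $g$-th split Jacobi polynomial is structurally identical to the ordinary $g$-th Jacobi polynomial of Definition~\ref{gen_jacobi}, except that the single pair of variables $(y_a, x_a)$ indexed by $a \in \FF_2^g$ is replaced by $\ell$ pairs $(y_{i,a}, x_{i,a})$, one for each block $X_i$ in the partition $[n] = X_1 \sqcup \cdots \sqcup X_\ell$. So conceptually the split identity should follow by running the same underlying computation on the refined monomial bookkeeping, keeping the blocks separate.

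First I would recall the derivation of Theorem~\ref{Thm:JacMacGen_g} itself. The standard route is to expand $J^{(g)}_{C^\perp, T}$ as a sum over $u_1,\dots,u_g \in C^\perp$, then use the orthogonality relation for additive characters of $\FF_2^g$, namely that for fixed columns the inner sum over $C^\perp$ collapses to a sum over $C$ weighted by signs $(-1)^{a\cdot b}$, with the factor $1/|C|^g$ coming from $|C^\perp| = 2^n/|C|$ applied coordinatewise and raised to the $g$-th power. Concretely, for each coordinate place $k$ the column $c_k = (u_{1k},\dots,u_{gk}) \in \FF_2^g$ contributes a variable, and the character sum converts the enumerator over $C^\perp$ into one over $C$ after substituting each $x_b \mapsto \sum_{a}(-1)^{a\cdot b} x_a$, etc. This is exactly the substitution appearing in the statement of Theorem~\ref{Thm:JacMacGen_g}.

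Second, I would carry out the same argument while respecting the partition. The product $\prod_{i=1}^{\ell}\prod_{a} y_{i,a}^{m_{a,i}} x_{i,a}^{n_{a,i}}$ factors as a product over the disjoint blocks, so the character-sum argument applies independently on each coordinate place: a place $k$ lying in $T_i$ uses the variable $y_{i,a}$ with $a = c_k$, a place in $X_i \setminus T_i$ uses $x_{i,a}$, and in every case the transform replaces that variable by $\sum_{b}(-1)^{a\cdot b}(\text{same variable})_{i,b}$. Because the partition is into disjoint sets whose union is $[n]$, every coordinate is accounted for exactly once and the block index $i$ simply rides along through the computation. The factor $1/|C|^g$ is unchanged, since it depends only on $|C|$ and $g$, not on how the coordinates are grouped. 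Thus the $\ell=1$ case recovers Theorem~\ref{Thm:JacMacGen_g}, as it must by the Remark following Definition~\ref{Def:GenSplitJacobi}.

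The main obstacle, and the part requiring the most care, is bookkeeping the simultaneous substitution across all $2\ell$ families of variables and verifying that the sign characters $(-1)^{a\cdot b}$ are correctly decoupled block-by-block rather than getting entangled. The subtlety is that the duality pairing acts on the genus index $a \in \FF_2^g$ (the column values), not on the block index $i$; one must check that the character orthogonality is applied per coordinate place and that the block structure is merely a passive relabeling of which variable family receives the transform. Once this is set up cleanly—ideally by writing the enumerator as a product over coordinate places and invoking the single-coordinate transform uniformly—the identity follows, and I expect the proof to be essentially a faithful transcription of the proof of Theorem~\ref{Thm:JacMacGen_g} with the variable $x_a$ (resp.\ $y_a$) systematically promoted to $x_{i,a}$ (resp.\ $y_{i,a}$) according to the block containing each coordinate.
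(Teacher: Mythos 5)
Your proposal is correct and follows essentially the same route as the paper: despite being framed as a ``reduction'' to Theorem~\ref{Thm:JacMacGen_g}, what you actually describe—expanding $J^{(g)}_{C^\perp,\ldots}$ via the indicator $\delta_{C^\perp}(v)=\frac{1}{|C|}\sum_{u\in C}(-1)^{u\cdot v}$, factoring the enumerator over coordinate places, and applying character orthogonality per coordinate so the block index $i$ rides along passively—is precisely the paper's direct computation. The only loose point is cosmetic: the $1/|C|^g$ factor arises from applying this character-sum identity to each of the $g$ codewords, not from $|C^\perp|=2^n/|C|$, but this does not affect the argument.
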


\begin{proof}
	Let $C$ be a code of length~$n$. 
	For ${v} \in \FF_{2}^{n}$, define
	\[
	\delta_{C^{\perp}}({v}) 
	:= 
	\begin{cases}
		1 & \mbox{if } {v} \in C^{\perp}, \\
		0 & \mbox{otherwise}.
	\end{cases} 
	\]
	Then we have the following identity:
	\[
		\delta_{C^\perp}({v})
		= 
		\dfrac{1}{|C|} 
		\sum_{{u} \in C} 
		(-1)^{{u} \cdot {v}}.
	\]
	
	\begin{align*}
		J&_{C^{\perp},X_{1}(T_{1}),\ldots,X_{\ell}(T_{\ell})}^{(g)} 
		(\{\{y_{{i},a},x_{{i},a}\}_{a \in \FF_{2}^{g}}\}_{1\leq i\leq \ell})\\
		& = 
		\sum_{{u}_{1},\ldots,u_{g}\in C^{\perp}} 
		\prod_{i = 1}^{\ell}
		\prod_{a \in \FF_{2}^{g}}
		y_{{i},a}^{m_{a,{i}}({u}_{1},\ldots,u_{g})}
		x_{{i},a}^{n_{a,i}({u}_{1},\ldots,u_{g})}\\
		& = 
		\sum_{{v}_{1},\ldots,v_{g} \in \FF_{2}^{n}} 
		\delta_{C^\perp}({v}_{1}) 
		\cdots
		\delta_{C^\perp}({v}_{g})
		\prod_{i = 1}^{\ell}
		\prod_{a \in \FF_{2}^{g}}
		y_{{i},a}^{m_{a,i}({v}_{1},\ldots,v_{g})}
		x_{{i},a}^{n_{a,i}({v}_{1},\ldots,v_{g})}\\
		& =
		\dfrac{1}{|C|^{g}} 
		\sum_{\substack{{u}_{1},\ldots, u_{g} \in C\\ 
				{v}_{1},\ldots, v_{g}\in \FF_{2}^{n}}}  
		(-1)^{{u}_{1}\cdot {v}_{1}+\cdots+u_{g}\cdot v_{g}} 
		\prod_{i = 1}^{\ell}
		\prod_{a \in \FF_{2}^{g}}
		y_{{i},a}^{m_{a,{i}}({v}_{1},\ldots,v_{g})}
		x_{{i},a}^{n_{a,{i}}({v}_{1},\ldots,v_{g})}\\
		& = 
		\dfrac{1}{|C|^{g}}
		\sum_{\substack{{u}_{1},\ldots,u_{g} \in C\\ 
				{v}_{1},\ldots,v_{g} \in \FF_{2}^{n}}} 
		(-1)^{\sum_{k = 1}^{g} u_{k1}v_{k1} + \cdots + u_{kn}v_{kn}}
		\prod_{i = 1}^{\ell}
		\left(
		\prod_{j \in T_{i}}
		y_{{i},v_{1j}\cdots v_{gj}}
		\right)
		\left(
		\prod_{j \in X_{i}\setminus T_{i}}
		x_{{i},v_{1j} \cdots v_{gj}}
		\right)\\
		& = 
		\dfrac{1}{|C|^{g}}
		\sum_{{u}_{1},\ldots, u_{g} \in C}
		\prod_{i = 1}^{\ell}
		\left(
		\prod_{j \in T_{i}} 
		\sum_{v_{1j},\ldots, v_{gj} \in \FF_{2}} 
		(-1)^{u_{1j}v_{1j}+\cdots + u_{gj}v_{gj}} 
		y_{{i},v_{1j}\cdots v_{gj}} 
		\right)\\
		&\hspace{1in}
		\left(
		\prod_{j \in X_{i}\setminus T_{i}} 
		\sum_{v_{1j},\ldots, v_{gj} \in \FF_{2}} 
		(-1)^{u_{1j}v_{1j}+\cdots + u_{gj}v_{gj}} 
		x_{{i},v_{1j}\cdots v_{gj}} 
		\right)\\
		& = 
		\dfrac{1}{|C|^{g}}
		\sum_{{u}_{1},\ldots,u_{g} \in C}
		\prod_{i=1}^{\ell}
		\prod_{a\in\FF_{2}^{g}} 
		\left(
		\sum_{b \in \FF_{2}^{g}}
		(-1)^{a \cdot b} 
		y_{{i},b}
		\right)
		^{m_{a,{i}}({u}_{1},\ldots,u_{g})}
		\left(
		\sum_{b \in \FF_{2}^{g}}
		(-1)^{a \cdot b} 
		x_{{i},b}
		\right)
		^{n_{a,{i}}({u}_{1},\ldots, u_{g})}  \\
		& = 
		\dfrac{1}{|C|^{g}}
		J_{C,X_{1}(T_{1}),\ldots,X_{\ell}(T_{\ell})}^{(g)}
		\left(
		\left\{
		\left\{
		\sum_{b \in \FF_{2}^{g}}
		(-1)^{a \cdot b} 
		y_{{i},b},
		\sum_{b \in \FF_{2}^{g}}
		(-1)^{a \cdot b} 
		x_{{i},b}
		\right\}_{a \in \FF_{2}^{g}}
		\right\}_{1\leq i \leq \ell}
		\right).	
	\end{align*}
	Hence the proof is completed.
\end{proof}

The following result reflects the basic motivation to introduce 
the concept of split complete Jacobi polynomials attached to multiple sets. 
We omit the proof of the theorem since it is a straightforward generalization
of Theorem~\ref{Thm:gthJacPolythomo}. 

\begin{thm}\label{Thm:JacToDesign}
	Let $C$ be a code of length~$n$. 
	Let $\bm{v} := (v_{1},\ldots,v_{\ell})$ such that $\sum_{i}^{\ell}v_{i} = n$.
	Let $\bm{X} := (X_{1},\ldots,X_{\ell})$ of pairwise disjoint set 
	$X_{i}\subseteq [n]$ 
	with $|X_{i}| = v_{i}$ for all $i$.
	If $C$ is $\ell$-th $t$-homogenous
	with $\bm{t} = (t_{1},\ldots,t_{\ell})$ such that 
	$\sum_{i = 1}^{\ell}t_{i} = t$
	then
	the genus~$g$ split Jacobi polynomial 
	$J_{C,X_{1}(T_{1}),\ldots,X_{\ell}(T_{\ell})}^{(g)}$ 
	with $T_{i} \in \binom{X_{i}}{t_{i}}$
	for all $i$ is independent of the choices of the sets $T_{1},\ldots,T_{\ell}$.
\end{thm}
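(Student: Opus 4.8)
The plan is to reduce Theorem~\ref{Thm:JacToDesign} to the single-block case already handled in Theorem~\ref{Thm:gthJacPolythomo} by showing that the $\ell$-th $t$-homogeneity hypothesis makes the product $C^{g}$ homogeneous in a way that is compatible with the block structure $\bm{X} = (X_{1},\ldots,X_{\ell})$. Concretely, I would copy the setup from the proof of Theorem~\ref{Thm:gthJacPolythomo}: write an element of $C^{g}$ as a $g \times n$ matrix $\widetilde{c}$ with columns $c_{i} = (u_{1i},\ldots,u_{gi})^{t} \in \FF_{2}^{g}$, and for each $a \in \FF_{2}^{g}$ and each block $X_{j}$ record the restricted support $\supp_{a,X_{j}}(\widetilde{c}) := \{i \in X_{j} \mid c_{i} = a\}$. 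The monomial of $\widetilde{c}$ in the $g$-th split Jacobi polynomial is determined precisely by how these restricted supports meet each prescribed $T_{j} \in \binom{X_{j}}{t_{j}}$, so independence of the choice of $(T_{1},\ldots,T_{\ell})$ is equivalent to a design-regularity statement for these supports.

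First I would fix a target column-pattern: a tuple $(a^{(1)},\ldots,a^{(g)})$ need not be used; rather, fix for each block $X_{j}$ a value $a_{j}\in\FF_{2}^{g}$ and a count $\ell_{j}$, and consider the collection of columns of $\widetilde{c}$ falling into each $a$-class within each block. The key point is that the $g$-th split Jacobi polynomial, expanded over $\widetilde{c} \in C^{g}$, groups codeword-tuples according to the intersection numbers $|\supp_{a,X_{j}}(\widetilde{c}) \cap T_{j}|$ for each $a$ and each $j$. If for every fixed profile $\{n_{a,X_{j}}(\widetilde{c})\}_{a,j}$ the family of supports $\{\bm{K}(\widetilde{c})\}$ forms a generalized $t$-$(\bm{v},\bm{k},\lambda)$ design in the sense of the Preliminaries, then the number of $\widetilde{c}$ contributing a given monomial does not depend on the specific $T_{j}$'s, only on their sizes $t_{j}$. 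So the central step is: since $C$ is $\ell$-th $t$-homogeneous, I must verify that $C^{g}$ inherits the $\ell$-th $t$-homogeneous property, i.e. that for each fixed $a \in \FF_{2}^{g}$ the sets $\mathcal{B}(C^{g}_{a,\cdot})$ restricted to the block decomposition form generalized $t$-designs. This is the direct analogue of the computation in Theorem~\ref{Thm:gthJacPolythomo} showing $\mathcal{B}(C_{a,\ell}^{(g)})$ is a $t$-design, now refined to respect each $X_{j}$ and each partial degree $t_{j}$.

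The hard part will be making the passage from $C$ to $C^{g}$ rigorous while keeping track of the block-wise composition $\bm{t} = (t_{1},\ldots,t_{\ell})$: one must argue that $\ell$-th $t$-homogeneity of a single code $C$ forces the column-class supports of $\widetilde{c} \in C^{g}$ to be regular simultaneously in all $\ell$ blocks, not merely in the aggregate. I would handle this by observing that for a fixed column value $a \in \FF_{2}^{g}$, the positions $i$ with $c_{i} = a$ are exactly the common-agreement positions of the $g$ codewords $u_{1},\ldots,u_{g}$, and that intersecting with $X_{j}$ and counting against $T_{j} \in \binom{X_{j}}{t_{j}}$ invokes the generalized-design regularity of $C$ block by block; since the total $\sum_{j} t_{j} = t$ matches the design strength, the count $\lambda$ for each block-profile is well defined and $T$-independent. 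Once this regularity is established, the conclusion is immediate: collecting terms of $J_{C,X_{1}(T_{1}),\ldots,X_{\ell}(T_{\ell})}^{(g)}$ by monomial shows each coefficient depends only on $(t_{1},\ldots,t_{\ell})$ and the block sizes, so the polynomial is independent of the particular choice of the $T_{j}$'s, completing the proof exactly as the statement advertises it follows from Theorem~\ref{Thm:gthJacPolythomo}.
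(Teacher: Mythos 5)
Your proposal matches the paper's treatment: the paper actually omits the proof of this theorem, stating only that it is a straightforward generalization of Theorem~\ref{Thm:gthJacPolythomo}, and your plan---repeating that theorem's $C^{g}$ column-support argument while tracking the block decomposition, so that $\ell$-th $t$-homogeneity of $C$ gives the generalized-design regularity of the supports $\supp_{a}(\widetilde{c}) \cap X_{j}$ against each $T_{j} \in \binom{X_{j}}{t_{j}}$---is exactly the generalization the authors have in mind. The level of rigor in your sketch is at least that of the paper's own proof of Theorem~\ref{Thm:gthJacPolythomo}, so there is no gap to report relative to the paper.
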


In the situation described in the above theorem, 
$J_{C,X_{1}(T_{1}),\ldots,X_{\ell}(T_{\ell})}^{(g)}$
the split Jacobi polynomials in genus~$g$ 
is independent of the choices of the sets $T_{1},\ldots,T_{\ell}$. 
In this case, we prefer to denote the split Jacobi polynomials
in genus~$g$ as 
$J_{X_{1}(t_1),\ldots,X_{\ell}(t_{\ell})}^{(g)}$. 
In particular, when $\bm{k} = (k)$ and $\bm{t} = (t)$, 
the split Jacobi polynomials in genus~$g$ are coincide with
the Jacobi polynomials in genus~$g$.

Let $\ell$, $n$ be the positive integers such that $\ell \leq n$. 
Let $\bm{v}:= (v_{1},\ldots,v_{\ell})$ 
such that $\sum_{i=1}^{\ell} v_{i} = n$. 
Let $P(\{\{y_{{i},a},x_{{i},a}\}_{a \in \FF_{2}^{g}}\}_{1\leq i \leq \ell})$ be a polynomial of degree~$n$ in $2^{g+1}\ell$ variables 
such that in its each term the sum of the powers of 
$x_{{i},a}$ and $y_{{i},a}$ for all 
$a\in\FF_{2}^{g}$ is~$v_{i}$ for all~$i$. 
Define the polarization operator 
$A_{(g),\ell}(k)$ 
for any integer $1 \leq k \leq \ell$
as follows:
\begin{equation*}\label{Equ:Operator}
	A_{(g),\ell}(k)
	\cdot
	P
	:= 
	\frac{1}{v_{k}}
	\left(
	\sum_{a\in\FF_{2}^{g}}
	y_{{k},a} 
	\frac{\partial}{\partial x_{{k},a}}
	\right)
	P.
\end{equation*}
If $\ell = 1$, 
then $A_{(g),\ell}(k)$ coincide with the polarization operator as $A_{(g)}$
that discussed in Section~\ref{Sec:JacobiPoly}. 

Now we have the following split analogues of Theorems~\ref{Thm:1-homo_g} and \ref{Thm:t-homo_g}.

\begin{thm}\label{Thm:Main1}
	Let $C$ be a code of length $n$. 
	Let $\bm{v} := (v_{1},\ldots,v_{\ell})$ 
	such that $\sum_{i=1}^{\ell} v_{i} = n$.
	We also let $X_{1},\ldots,X_{\ell}$
	be the mutually disjoint subsets of $[n]$ such that $X_{1} \sqcup\cdots\sqcup X_{\ell} = [n]$ and $|X_{i}| = v_{i}$ for all $i$. 
	If~$C$ is $\ell$-th $1-$homogeneous, 
	then 
	\[
		J_{C,X_{1}(\emptyset),\ldots,X_{k-1}(\emptyset),X_{k}(\{i\}),X_{k+1}(\emptyset),\ldots,X_{\ell}(\emptyset)}^{(g)}
		=
		A_{(g),\ell}(k)
		\cdot
		W_{C,X_{1},\ldots,X_{k-1},X_{k},X_{k+1},\ldots,X_{\ell}}^{(g)}.
	\]
\end{thm}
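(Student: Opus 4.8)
The plan is to reduce the multivariate split statement of Theorem~\ref{Thm:Main1} to the single-set averaging lemma already proved in genus~$g$, namely Lemma~\ref{Lem:1-homo}, applied to the distinguished block $X_k$. The key observation is that the split Jacobi polynomial
\[
J_{C,X_{1}(\emptyset),\ldots,X_{k}(\{i\}),\ldots,X_{\ell}(\emptyset)}^{(g)}
\]
differs from the split weight enumerator $W_{C,X_{1},\ldots,X_{\ell}}^{(g)}$ only in the $k$-th block of variables: outside $X_k$ every coordinate still contributes an $x_{j,a}$ factor, and inside $X_k$ exactly one coordinate $i$ has been promoted from a variable $x_{k,a}$ (counted by $n_{a,k}$) to a variable $y_{k,a}$ (counted by $m_{a,k}$), while the remaining $v_k-1$ coordinates of $X_k$ still produce $x_{k,a}$ factors. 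Thus the passage from the weight enumerator to this particular split Jacobi polynomial is literally the operation ``delete coordinate $i$ from $X_k$ and re-record it with a $y$-variable,'' which is exactly the mechanism analyzed in Theorem~\ref{Thm:1-homo_g} and its surrounding discussion, now localized to the block $X_k$.

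The plan is to carry this out in the following order. First I would fix the blocks $X_1,\ldots,X_\ell$ and, holding the coordinates outside $X_k$ frozen as passive $x$-variables, view both sides as polynomials built only from the activity inside $X_k$; this is the step that lets the $\ell$-block problem collapse to a one-block problem of effective length $v_k=|X_k|$. Second, I would invoke the hypothesis that $C$ is $\ell$-th $1$-homogeneous to extract that, for each fixed pattern on the other blocks, the shortened/punctured behavior of $C$ restricted to $X_k$ is independent of which coordinate $i\in X_k$ is chosen; this independence is what makes the averaging sound and is the block-wise analogue of the ``$C-i$ is unique'' condition in Lemma~\ref{Lem:1-homo}. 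Third, with that independence in hand, I would run the coefficient-counting computation exactly as in the proof of Lemma~\ref{Lem:1-homo}, but with the operator $\sum_{a\in\FF_2^g} y_{k,a}\,\partial/\partial x_{k,a}$ acting only on the $k$-th family of variables, and divide by $v_k$; by the very definition of $A_{(g),\ell}(k)$ this normalized operator is precisely what appears in the statement.

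Concretely, I would show that for every $a\in\FF_2^g$ the sum over the $v_k$ coordinates of $X_k$ of the block-restricted $Z$-terms equals $\partial/\partial x_{k,a}$ applied to the split weight enumerator, because $\partial/\partial x_{k,a}$ differentiates the factor $x_{k,a}^{n_{a,k}}$ and brings down the exponent $n_{a,k}$, which counts exactly the coordinates of $X_k$ carrying the column value $a$. Multiplying by $y_{k,a}$ reinstalls a $y$-variable in the $k$-th block, summing over $a\in\FF_2^g$ accounts for all possible column values of the promoted coordinate, and the factor $1/v_k$ averages over the $v_k$ equally-contributing choices of $i\in X_k$ guaranteed by $\ell$-th $1$-homogeneity. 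Assembling these pieces yields
\[
J_{C,X_{1}(\emptyset),\ldots,X_{k}(\{i\}),\ldots,X_{\ell}(\emptyset)}^{(g)}
=
A_{(g),\ell}(k)\cdot W_{C,X_{1},\ldots,X_{\ell}}^{(g)},
\]
as claimed.

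I expect the main obstacle to be bookkeeping rather than conceptual: one must make precise the sense in which the $\ell$-th $1$-homogeneity of $C$ translates into the block-local independence needed to justify dividing by $v_k$, since homogeneity is a statement about designs on all of $[n]$ whereas here it must be applied with the other blocks held fixed. The cleanest way to handle this is to mirror the $g$-power construction $C^g$ from the proof of Theorem~\ref{Thm:gthJacPolythomo}: the hypothesis that $C$ is $\ell$-th $1$-homogeneous forces the relevant block-restricted incidence sets $\mathcal{B}(C^{(g)}_{a,\ell})$ to form $1$-designs on $X_k$, and it is exactly this design property that makes each coordinate of $X_k$ contribute symmetrically, legitimizing the average and completing the argument.
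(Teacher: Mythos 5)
Your proposal is correct and takes essentially the same route as the paper: the paper's entire proof of Theorem~\ref{Thm:Main1} is the remark that one follows the argument of Theorem~\ref{Thm:1-homo_g} (hence Lemma~\ref{Lem:1-homo}), localized to the block $X_k$, which is exactly what you do. Your version simply supplies the details the paper leaves implicit --- the block-restricted coefficient count showing $\sum_{a}y_{k,a}\,\partial/\partial x_{k,a}$ applied to $W^{(g)}_{C,X_1,\ldots,X_\ell}$ sums the promoted-coordinate Jacobi polynomials over $i\in X_k$, and the appeal to $\ell$-th $1$-homogeneity (via the design property underlying Theorem~\ref{Thm:JacToDesign}) to justify dividing by $v_k$.
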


\begin{proof}
	Following the similar arguments given in Theorem~\ref{Thm:1-homo_g},
	we can have the relation.
\end{proof}

\begin{thm}\label{Thm:SplitMain2}
	Let $C$ be a code of length $n$. 
	Let $\bm{v} := (v_{1},\ldots,v_{\ell})$ 
	such that $\sum_{i=1}^{\ell} v_{i} = n$.
	We also let $X_{1},\ldots,X_{\ell}$
	be the mutually disjoint subsets of $[n]$ such that $X_{1} \sqcup\cdots\sqcup X_{\ell} = [n]$ and $|X_{k}| = v_{k}$ for all $k$.
	If~$C$ is $\ell$-th $t$-homogeneous and contains no codeword of
	Hamming weight less than $t$, 
	then for $\bm{t} := (t_{1},\ldots,t_{\ell})$ 
	such that $\sum_{i=1}^{\ell}t_{i} = t$, 
	we have
	\[
		J_{C, X_{1}(T_1),\ldots,X_{\ell}(T_{\ell})}^{(g)}
		=
		A_{(g),\ell}^{t_{\ell}}(\ell)\cdots A_{(g),\ell}^{t_{1}}(1)
		\cdot
		W_{C,X_{1},\ldots,X_{\ell}}^{(g)},
	\] 
	for each $(T_{1},\ldots,T_{\ell}) \in \binom{X_{1}}{t_{1}} \times \cdots \times \binom{X_{\ell}}{t_{\ell}}$.
\end{thm}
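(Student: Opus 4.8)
The plan is to prove Theorem~\ref{Thm:SplitMain2} by induction, peeling off one coordinate at a time from the subsets $X_1,\ldots,X_\ell$ in the prescribed order, exactly mirroring the single-block argument of Theorem~\ref{Thm:t-homo_g} but now tracking which block each deleted coordinate moves into. The key observation is that the split operator $A_{(g),\ell}(k)$ is designed to ``transfer'' a single coordinate out of the free part $X_k\setminus T_k$ and into the distinguished part $T_k$: applying it once to a split weight/Jacobi polynomial replaces a factor $x_{k,a}$ by $y_{k,a}$ while dividing by the appropriate count $v_k$. So the strategy is to build up the tuple $(T_1,\ldots,T_\ell)$ one element at a time, first accumulating $t_1$ coordinates into $X_1$, then $t_2$ into $X_2$, and so on, which is precisely the reading order of the operator product $A_{(g),\ell}^{t_\ell}(\ell)\cdots A_{(g),\ell}^{t_1}(1)$.

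First I would set up the inductive framework by introducing a partially-filled split Jacobi polynomial, where $T_k$ has been filled to size $s_k \leq t_k$ for $k < m$, size $s_m$ for the current block $m$, and is empty for $k > m$. The base case is the fully split weight enumerator $W_{C,X_1,\ldots,X_\ell}^{(g)}$, corresponding to all $T_k=\emptyset$. The single inductive step is the split analogue of the computation in Lemma~\ref{Lem:1-homo} and Theorem~\ref{Thm:1-homo_g}: deleting one coordinate $i$ from $X_m\setminus T_m$ and reassigning it to $T_m$. The $\ell$-th $t$-homogeneity hypothesis guarantees, via Theorem~\ref{Thm:JacToDesign}, that the resulting polynomial is independent of \emph{which} coordinate $i$ we choose, so we may average over all available coordinates; this averaging is what produces the normalizing factor. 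The crucial bookkeeping point is the denominator: when $s_m$ coordinates have already been placed in $X_m$, there are $v_m - s_m$ free coordinates remaining in $X_m\setminus T_m$, so the derivative-and-transfer step contributes a factor $1/(v_m - s_m)$. Composing these over $s_m = 0,1,\ldots,t_m-1$ for each block yields $\prod_{m=1}^{\ell}\prod_{s=0}^{t_m-1}\frac{1}{v_m - s}$; since each application of $A_{(g),\ell}(m)$ already carries its own factor $1/v_m$ by definition, I would verify that the operator definition absorbs the normalization cleanly and the statement holds with no explicit constant, as written.

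The main obstacle will be justifying the averaging step rigorously, that is, showing that the split weight enumerator restricted to the current block behaves like a genuine homogeneous polynomial in the $X_m$-variables so that the derivative $\sum_a y_{m,a}\,\partial/\partial x_{m,a}$ reproduces the coordinate-transfer combinatorially. Concretely, one must check that the coefficient extraction carried out in Lemma~\ref{Lem:1-homo}---writing the sum over codewords as a sum over monomials weighted by the design parameter---goes through \emph{block by block} when the other blocks carry frozen (already-transferred) exponents. This requires the hypothesis that $C$ contains no codeword of Hamming weight less than $t$, which ensures that at every stage of the induction the relevant shortened/punctured structure is nondegenerate, i.e.\ the designs $\mathcal{B}(C_{\bm{k}})$ are nontrivial and the counts $v_m - s_m$ are the correct divisors rather than accidental zeros. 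Once this block-wise homogeneity and the design-independence from Theorem~\ref{Thm:JacToDesign} are in hand, the inductive step is a direct transcription of the genus-$g$ calculation in Theorem~\ref{Thm:t-homo_g}, and the product of operators follows by composing the $\ell$ separate single-block inductions in the stated order.
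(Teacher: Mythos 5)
Your overall strategy is the right one; in fact the paper gives no proof of Theorem~\ref{Thm:SplitMain2} at all (it is stated at the end of Section~\ref{Sec:SplitJacobiPoly} without proof, and the companion Theorem~\ref{Thm:Main1} is disposed of with a one-line appeal to Theorem~\ref{Thm:1-homo_g}), and the intended argument is exactly the block-by-block coordinate-transfer induction you describe, with Theorem~\ref{Thm:JacToDesign} supplying the independence needed to average over the choice of transferred coordinate. Since the operators attached to distinct blocks act on disjoint sets of variables, they commute, so that part of your bookkeeping is unproblematic.

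The genuine gap is precisely the step you defer: the claim that ``the operator definition absorbs the normalization cleanly'' is false, and no verification can rescue it. The block-$k$ degree $v_k$ used in the definition of $A_{(g),\ell}(k)$ counts the powers of the $x_{k,a}$ \emph{and} of the $y_{k,a}$, so it does not decrease as coordinates are transferred; every application of $A_{(g),\ell}(k)$ contributes the constant factor $1/v_k$. Your transfer step, applied when $|T_k|=s$, instead produces the factor $v_k-s$ (there are $v_k-s$ coordinates available in $X_k\setminus T_k$, and by homogeneity each choice yields the same polynomial). Composing, your induction actually proves
\[
A_{(g),\ell}^{t_\ell}(\ell)\cdots A_{(g),\ell}^{t_1}(1)\cdot W_{C,X_1,\ldots,X_\ell}^{(g)}
=
\left(\prod_{k=1}^{\ell}\frac{v_k(v_k-1)\cdots(v_k-t_k+1)}{v_k^{t_k}}\right)
J_{C,X_1(T_1),\ldots,X_\ell(T_\ell)}^{(g)},
\]
which agrees with the stated constant-free identity only when every $t_k\le 1$. (Consistency check: at $\ell=1$ the stated identity would read $J_{C,T}^{(g)}=n^{-t}A_{(g)}^{t}W_C^{(g)}$, contradicting Theorem~\ref{Thm:t-homo_g}, whose normalization is the falling factorial $1/(n(n-1)\cdots(n-t+1))$.) So a correct write-up must either carry the correction factor $\prod_{k}v_k^{t_k}/(v_k(v_k-1)\cdots(v_k-t_k+1))$ into the conclusion, or work with un-normalized operators and falling-factorial constants as in Theorem~\ref{Thm:t-homo_g}; asserting that the constants cancel is exactly where the argument breaks, and it signals an error in the theorem's statement rather than something your induction can absorb. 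A smaller point: the hypothesis that $C$ has no codeword of weight less than $t$ is not about the counts $v_m-s_m$ being ``correct divisors''; its role is to make $\ell$-th $t$-homogeneity imply $\ell$-th $s$-homogeneity for every $s\le t$ (a nonempty weight class of positive weight below $t$ can hold a $t$-design vacuously while failing to hold an $s$-design), and it is this implication that licenses the averaging at each intermediate stage $s<t$ of your induction.
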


\section{Invariant Rings}\label{Sec:InvRing}

We refer the readers to~\cite{BannaiOzekiTeranishi, NRS} 
for detailed discussions on the invariant rings.
Here we give a brief discussion
that relevant to some useful facts and notations 
from invariant theory.
Let $G$ be a finite $m\times m$ matrix group that acts 
on a polynomial ring $\mathfrak{R}:=\CC[x_0,\ldots,x_{m-1}]$; 
for $\sigma\in G$ and $f(x_0,\ldots,x_{m-1})\in \mathfrak{R}$, 
\[
\sigma f(x_0,\ldots,x_{m-1})
=
f(\sigma(x_0,\ldots,x_{m-1})^t). 
\]
Then 
$G$ acts on the polynomial ring 
$\mathfrak{R}$
in a natural way. 
Set
\[
\mathfrak{R}^{G}
:=
\{
f \in 
\mathfrak{R}
\mid
\sigma f = f
\mbox{ for all }
\sigma \in G
\}.
\]
Then the dimension formula for the invariant ring~$\mathfrak{R}^{G}$ is
\[
\Phi_{G}(t)
=\sum_{d \ge 0}
(\dim\mathfrak{R}^{G}_{d}) t^{d},
\]
where $\mathfrak{R}^{G}_{d}$ is the homogeneous part of degree~$d$ in~$\mathfrak{R}^{G}$.
Therefore, the ring~$\mathfrak{R}^{G}$ is graded as:
\[
\mathfrak{R}^{G}
=
\bigoplus_{d=0}^{\infty}
\mathfrak{R}_{d}^{G}.
\]
Furthermore, Molien~\cite{Molien} presented a beautiful expression 
of $\Phi_{G}(t)$ as: 
\[
\Phi_{G}(t)
=
\frac{1}{|G|}
\sum_{\sigma\in G}
\frac{1}{\det{(1-t\sigma)}}.
\]

In particular, let $H$ be the group generated by
\[
\sigma_{1} = \frac{1}{\sqrt{2}}
\begin{pmatrix*}[r]
	1 & 1\\
	1 & -1
\end{pmatrix*}
\mbox{ and }
\sigma_2
=
\begin{pmatrix}
	1 & 0\\
	0 & i
\end{pmatrix}.
\]
This group is of order~$192$ and is known 
as finite unitary reflection group No.~$9$ 
in Shephard and Todd's list~\cite{ST}.
A finite group whose invariant ring is generated 
by the algebraically independent elements over~$\CC$ 
is called a \emph{finite unitary refection group}.
Here $\CC[x_{0},x_{1}]^{H}$ is the invariant ring under the
action of group~$H$.
Gleason~\cite{Gleason} pointed out a remarkable fact 
that the invariant ring 
$\CC[x_{0},x_{1}]^{H}$ is generated by two 
weight enumerators 
$W_{d_{8}^{+}}(x_{0},x_{1})$ and $W_{g_{24}}(x_{0},x_{1})$ 
of Type~$\II$ codes~$d_{8}^{+}$ and $g_{24}$, 
where 
the code $d_n^+$ means the code with the generator matrix:
\[\left(
\begin{array}{*{11}c}
	1 & 1 & 1 & 1 & 0 & 0 &  \ldots & 0 & 0 & 0 & 0 \\
	0 & 0 & 1 & 1 & 1 & 1 &  \ldots & 0 & 0 & 0 & 0 \\
	\vdots & \vdots & \ddots & &&  & \ddots & & & \\
	0 & 0 & 0 & 0 & 0 & 0 &  \ldots & 1 & 1 & 1 & 1 \\
	1 & 0 & 1 & 0 & 1 & 0 &  \ldots & 1 & 0 & 1 & 0
\end{array} \right)
\]
and $g_{24}$ is an extremal Golay code of length~$24$.
However, a new observation investigated by Fujii and Oura~\cite{FO2018}
showed that
\[
	\CC[x_{0},x_{1}]^{H}
	=
	\CC[W_{d_{8}^{+}}(x_{0},x_{1}), W_{d_{24}^{+}}(x_{0},x_{1})],
\]
where 
$W_{d_{8}^{+}}(x_{0},x_{1})$ and $W_{d_{24}^{+}}(x_{0},x_{1})$
are algebraically independent over~$\CC$.
In this section, 
we shall construct the invariant rings 
under the action of finite unitary  
reflection groups
and obtain some similar equalities as above
for Jacobi polynomials for arbitrary genus~$g$.  

Now let us define the following matrices in $\mathrm{GL}(2^g,\CC)$:

\begin{align*}
	T_{g}
	& :=
	\left( 
	\frac{1+i}{2} 
	\right)^g 
	\left( 
	(-1)^{a \cdot b}
	\right)_{a,b \in \mathbb{F}_2^{g}},\\
	E_{g}
	& :=
	\diag(1,i,1,i,\dots).
\end{align*}
Then
\[
	\widetilde{T}_{g}
	:=
	\begin{pmatrix}
		T_{g} & \\
		& T_{g}
	\end{pmatrix}
	\mbox{ and }
	\widetilde{E}_{g}
	:=
	\begin{pmatrix}
		E_{g} & \\
		& E_{g}
	\end{pmatrix}.
\] 

Let $C$ be a binary code of length~$n$.
Then by using the 
transformation~$\widetilde{T}_{g}$,
the MacWilliams identity for 
Jacobi polynomials of~$C$ in genus~$g$
stated in~Theorem~\ref{Thm:JacMacGen_g}
can be restated as:
\begin{multline*}
	\widetilde{T}_{g}
	\left(
	\left(\frac{1}{\sqrt{2}}\right)^{g(\dim C)}
	J_{C,T}^{(g)}
	(\{\{y_{a},x_{a}\}_{a \in \FF_{2}^{g}}\})
	\right)\\
	=
	(\eta_{8}^{ng})
	\left(\frac{1}{\sqrt{2}}\right)^{g(\dim C^{\perp})}
	J_{C^{\perp},T}^{(g)}
	(\{\{y_{a},x_{a}\}_{a \in \FF_{2}^{g}}\}).
\end{multline*}

From the above identity and definitions of~$\widetilde{T}_{g}$
and~$\widetilde{E}_{g}$, we can conclude the following remarks.
 
\begin{rem}\label{Rem:JacTg}
	The Jacobi polynomials in genus~$g$ 
	of a self-dual code
	is remain invariant under the transformation~$\widetilde{T}_{g}$.
\end{rem}

\begin{rem}\label{Rem:JacEg}
	The Jacobi polynomials in genus~$g$ 
	of a double even code
	is remain invariant under the transformation~$\widetilde{E}_{g}$.
\end{rem}

Let $\sigma \in \FF_2^{g} \rtimes \mathrm{GL}(g,\mathbb{F}_2)$ 
together with the natural action on~$\FF_{2}^{g}$ 
defined by~$\sigma(a) := (v,M)(a) := Ma + v$. 
Now we construct a $(0,1)-$matrix
$M_g(\sigma)$, 
where $M_g(\sigma)$ has non-zero entries on $(a,\sigma(a))$ 
for $a \in \mathbb{F}_2^g$.
Then
\[
	\widetilde{M}_{g}(\sigma)
	:=
	\begin{pmatrix}
		M_{g}(\sigma) & \\
		& M_{g}(\sigma)
	\end{pmatrix}.
\]
Therefore, 
the matrix $\widetilde{M}_g (\sigma)$ 
transform $\lbrace x_a,y_a \rbrace_{a \in \mathbb{F}_2^g}$ to $\lbrace \sigma(x_a),\sigma(y_a) \rbrace_{a \in \FF_{2}^{g}}$.
This implies the following relation. 
\begin{align*}
	\widetilde{M}_{g}(\sigma)
	\left(J_{C,T}^{(g)} (\lbrace x_a, y_a \rbrace_{a \in \mathbb{F}_2^g})\right)
	= J_{C,T}^{(g)} (\lbrace x_{\sigma (a) }, y_{\sigma (a)} \rbrace_{a \in \mathbb{F}_2^g}).
\end{align*}

From the above relation, we can have the following fact.

\begin{rem}\label{Rem:JacMgsigma}
	The Jacobi polynomials in genus~$g$ 
	of a binary code containing all one vector
	is remain invariant under the transformation~$\widetilde{M}_{g}(\sigma)$.
\end{rem}

Let 
\[
	G_{g}
	:=
	\langle
		\widetilde{T}_{g},
		\widetilde{E}_{g},
		\widetilde{M}_{g}(\sigma),
		\eta_{8}
		\mid
		\mbox{ for all }
		\sigma \in \FF_{2}^{g} \rtimes \mathrm{GL}(g,\FF_{2})
	\rangle
\]
be the subgroup of $\mathrm{GL}(2^{g+1},\CC)$,
where $\eta_{8} := (1+i)/\sqrt{2}$ 
is the primitive $8$th root of unity. 
The orders of group~${G}_{g}$ 
for~$g = 1, 2$
are shown in Table~\ref{Tab:OrderGgell}.

The above discussions and Remarks~\ref{Rem:JacTg}, \ref{Rem:JacEg} and~\ref{Rem:JacMgsigma} proved the following result.

\begin{thm}\label{Thm:JacInvUnderGg}
	The Jacobi polynomials in genus~$g$ of a Type~$\II$ code
	is invariant under the action of~$G_{g}$.
\end{thm}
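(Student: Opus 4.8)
The plan is to assemble the result from the three remarks that were established just prior to the statement, since the theorem is essentially a bookkeeping assertion: the group $G_g$ is generated by exactly those four types of elements whose invariance has already been verified individually for Jacobi polynomials of the appropriate codes. First I would observe that a polynomial is invariant under a group generated by a set $S$ precisely when it is invariant under each generator in $S$ (because the invariant set is a subgroup, hence closed under products and inverses). Thus it suffices to check invariance under $\widetilde{T}_g$, $\widetilde{E}_g$, $\widetilde{M}_g(\sigma)$ for all $\sigma \in \FF_2^g \rtimes \mathrm{GL}(g,\FF_2)$, and the scalar $\eta_8$.

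Next I would verify that a Type~$\II$ code $C$ satisfies every hypothesis required by those three remarks simultaneously. A Type~$\II$ code is self-dual, so Remark~\ref{Rem:JacTg} gives invariance under $\widetilde{T}_g$; it is doubly-even, so Remark~\ref{Rem:JacEg} gives invariance under $\widetilde{E}_g$; and since a doubly-even self-dual binary code always contains the all-one vector (as $\allone \in C^\perp = C$ follows from every codeword having weight divisible by $4$, hence even, so $\allone \cdot u = 0$ for all $u \in C$), Remark~\ref{Rem:JacMgsigma} gives invariance under $\widetilde{M}_g(\sigma)$ for each $\sigma$. The remaining generator $\eta_8$ acts as a scalar; I would note that the genus~$g$ Jacobi polynomial $J_{C,T}^{(g)}$ is homogeneous of total degree $ng$ in the variables, and the restatement of the MacWilliams identity displayed just above the remarks shows the factor $\eta_8^{ng}$ appearing. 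For a Type~$\II$ code the length $n$ is divisible by $8$, so $\eta_8^{ng} = 1$, confirming that the scalar generator fixes $J_{C,T}^{(g)}$ as well.

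Finally I would collect these observations into the conclusion: since $J_{C,T}^{(g)}$ is fixed by each generator of $G_g$, it is fixed by the whole group, so the Jacobi polynomials in genus~$g$ of a Type~$\II$ code lie in the invariant ring $\mathfrak{R}^{G_g}$. The main obstacle, such as it is, is not any single deep computation but rather making sure the hypotheses of the three remarks are genuinely met by a single code class at once and that the scalar $\eta_8$ truly acts trivially; the latter hinges on the divisibility $8 \mid n$ for Type~$\II$ codes, which must be invoked explicitly so that $\eta_8^{ng}=1$. I would therefore state clearly at the outset that $C$ is Type~$\II$ of length $n$ with $8 \mid n$ and $\allone \in C$, and then the proof reduces to citing Remarks~\ref{Rem:JacTg}, \ref{Rem:JacEg}, and~\ref{Rem:JacMgsigma} together with this degree count for the scalar.
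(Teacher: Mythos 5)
Your proposal matches the paper's own proof, which simply cites Remarks~\ref{Rem:JacTg}, \ref{Rem:JacEg} and~\ref{Rem:JacMgsigma} together with the preceding discussion; your explicit check that a Type~$\II$ code contains the all-one vector and your handling of the scalar generator $\eta_8$ are exactly what the paper's phrase ``the above discussions'' implicitly supplies. One small correction: $J_{C,T}^{(g)}$ is homogeneous of degree $n$, not $ng$ (each of the $n$ coordinates contributes exactly one variable to each monomial); the factor $\eta_8^{ng}$ in the displayed identity arises from the scalar prefactor $\left(\frac{1+i}{2}\right)^g$ of $T_g$ raised to the power $n$, but since $8 \mid n$ for Type~$\II$ codes we have $\eta_8^{n}=\eta_8^{ng}=1$, so your conclusion stands.
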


Let $\mathfrak{J}^{(g)}$ be the ring generated over~$\CC$ 
by the Jacobi polynomials in genus~$g$ of Type~$\II$ codes. 
Then this ring is graded as
\[
	\mathfrak{J}^{(g)}
	=
	\bigoplus_{n \equiv 0 \pmod 8}
	\mathfrak{J}_{n}^{(g)}.
\]
By Theorem~\ref{Thm:JacInvUnderGg}, 
it is immediate that~$\mathfrak{J}^{(g)}$
is an invariant ring under
the action of group~${G}_{g}$ in a natural way.
Therefore, we have the dimension formula for the invariant 
ring~$\mathfrak{J}^{(g)}$ 
as follows:
\begin{align*}
	\Phi_{{G}_{g}}(t)
	& :=
	\sum_{n \equiv 0\pmod 8}
	\left(\dim\mathfrak{J}_{n}^{(g)}\right) 
	t^{n}\\
	& =
	\frac{1}{|{G}_{g}|}
	\sum_{\sigma\in {G}_{g}}
	\frac{1}{\det{(1-t\sigma)}},
\end{align*}
where $\mathfrak{J}_{n}^{(g)}$ 
is the homogeneous part of degree~$n$
in the ring $\mathfrak{J}^{(g)}$.

\begin{thm}\label{Thm:MolienJacG11}
	The dimension formula for the ring $\mathfrak{J}^{(1)}$
	is as follows:
	\begin{align*}
		\Phi_{{G}_{1}}(t)
		& = 
		\sum_{n \equiv 0\pmod 8}
		\left(\dim\mathfrak{J}_{n}^{(1)}\right) 
		t^{n}\\
		& =
		\frac{1+8 t^8 + 21 t^{16} + 58 t^{24} +47 t^{32}+ 35 t^{40}+ 21 t^{48} 	+ t^{56}}{(1-t^8)^2 \, (1-t^{24})^2}\\
		& = 
		1 + 10 t^{8} + 40 t^{16} + 130 t^{24} + 283 t^{32} + 513 t^{40} + \cdots.
	\end{align*}
\end{thm}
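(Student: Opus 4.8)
The plan is to read the asserted identity as a Molien series computation for the group $G_1$ and to evaluate it explicitly. By Theorem~\ref{Thm:JacInvUnderGg} the Jacobi polynomials in genus~$1$ of Type~$\II$ codes all lie in $\mathfrak{R}^{G_1}$, so $\mathfrak{J}^{(1)}$ is graded by $\mathfrak{J}^{(1)} = \bigoplus_{n \equiv 0 \pmod 8}\mathfrak{J}_n^{(1)}$ and, under the identification of $\mathfrak{J}^{(1)}$ with the invariant ring $\mathfrak{R}^{G_1}$ already set up in this section, its dimension generating function is exactly the Molien series $\Phi_{G_1}(t) = \frac{1}{|G_1|}\sum_{\sigma \in G_1}\frac{1}{\det(1 - t\sigma)}$. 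Thus the whole task reduces to computing the right-hand side in closed form.

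First I would make the generators fully explicit as $4 \times 4$ complex matrices. For $g = 1$ the variables are $x_0, x_1, y_0, y_1$, and $\widetilde{T}_1,\widetilde{E}_1$ are the block-diagonal matrices built from $T_1 = \frac{1+i}{2}\left(\begin{smallmatrix}1 & 1\\ 1 & -1\end{smallmatrix}\right)$ and $E_1 = \diag(1,i)$; since $\mathrm{GL}(1,\FF_2)$ is trivial, $\widetilde{M}_1(\sigma)$ is either the identity or the involution swapping $x_0 \leftrightarrow x_1$ and $y_0 \leftrightarrow y_1$, and $\eta_8$ acts as the scalar $\eta_8 I_4$. With these four matrices I would generate the group, confirm finiteness, and record its order $|G_1|$ (the value listed in Table~\ref{Tab:OrderGgell}).

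Next, because $\det(1 - t\sigma)$ is a class function, depending only on the multiset of eigenvalues of $\sigma$, I would organize the Molien sum by conjugacy classes: for each class pick a representative $\sigma$, factor its characteristic polynomial as $\prod_{j=1}^{4}(1 - \lambda_j t)$ with each $\lambda_j$ a root of unity, and accumulate $\frac{|\text{class}|}{\prod_j(1 - \lambda_j t)}$. Summing over all classes and dividing by $|G_1|$ produces $\Phi_{G_1}(t)$ as a single rational function, which I would then place over the common denominator $(1 - t^8)^2(1 - t^{24})^2$ and simplify to the stated numerator $1 + 8t^8 + 21 t^{16} + 58 t^{24} + 47 t^{32} + 35 t^{40} + 21 t^{48} + t^{56}$; a power-series expansion then recovers $1 + 10 t^8 + 40 t^{16} + 130 t^{24} + \cdots$. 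All of this arithmetic is carried out in SageMath.

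The main obstacle is purely computational bookkeeping: enumerating the conjugacy classes of a group of sizable order and correctly tracking the eigenvalue data, since each factor $\det(1-t\sigma)$ must be handled over the cyclotomic field generated by $\eta_8$. The one conceptual point worth flagging is the implicit identification $\mathfrak{J}^{(1)} = \mathfrak{R}^{G_1}$, i.e.\ that \emph{every} $G_1$-invariant is a polynomial in Jacobi polynomials of Type~$\II$ codes; for the present dimension formula this is precisely the content being recorded, and it is corroborated a posteriori by the explicit generators of matching degrees exhibited in Theorem~\ref{Thm:JacGenJ8J24}.
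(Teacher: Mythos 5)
Your proposal matches the paper's approach: the paper states this theorem without a separate written proof, relying exactly on the preceding setup (Molien's formula $\Phi_{G_1}(t)=\frac{1}{|G_1|}\sum_{\sigma\in G_1}\det(1-t\sigma)^{-1}$ applied to the order-$192$ group $G_1$) together with a SageMath computation, which is precisely what you carry out. You even flag the same implicit identification of $\mathfrak{J}^{(1)}$ with the full invariant ring that the paper leaves tacit and only corroborates later via Theorem~\ref{Thm:JacGenJ8J24}, so your write-up is, if anything, slightly more careful on that point.
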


\begin{thm}\label{Thm:MolienJac21}
	The dimension formula for the ring $\mathfrak{J}^{(2)}$
	is as follows:
	\begin{align*}
		\Phi_{{G}_{2}}(t) 
		= &
		\sum_{n \equiv 0\pmod 8}
		\left(\dim\mathfrak{J}_{n}^{(2)}\right) 
		t^{n} \\
		= & 
		\, \, (t^{184} + 8t^{176} + 49t^{168} + 325t^{160} + 1240t^{152} + 3421t^{144} \\
		&+ 7987t^{136} + 15287t^{128} + 24892t^{120} + 35648t^{112} + 45097t^{104} \\
		& + 50365t^{96}  + 50365t^{88} + 45097t^{80} + 35648t^{72} + 24892t^{64} \\
		& + 15287t^{56} + 7987t^{48} + 3421t^{40} + 1240t^{32} + 325t^{24} + 49t^{16}\\
		& + 8t^{8} + 1) / ((1-t^8)^2 (1-t^{24})^4 (1-t^{40})^2) \\
		= & \, \, 1 + 10 t^{8} + 68 t^{16} + 455 t^{24} + 2114 t^{32} + 7392 t^{40} + \cdots.
	\end{align*}
\end{thm}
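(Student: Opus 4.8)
The plan is to prove the formula by evaluating Molien's series
\[
	\Phi_{G_{2}}(t)
	=
	\frac{1}{|G_{2}|}
	\sum_{\sigma \in G_{2}}
	\frac{1}{\det(1-t\sigma)}
\]
directly for the group $G_{2} \subseteq \mathrm{GL}(2^{3},\CC)$ defined above, and then to recognize the resulting rational function in the stated closed form. Here the identification of $\sum_{n}(\dim\mathfrak{J}^{(2)}_{n})\,t^{n}$ with this Molien sum is exactly the setup preceding Theorem~\ref{Thm:MolienJacG11}, namely that $\mathfrak{J}^{(2)}$ is the invariant ring of $G_{2}$ (Theorem~\ref{Thm:JacInvUnderGg}), so that what must be computed is the Molien series of $G_{2}$. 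Finiteness of $G_{2}$ is not an issue: each generator $\widetilde{T}_{2},\widetilde{E}_{2},\widetilde{M}_{2}(\sigma),\eta_{8}$ is a root of unity times a unitary (indeed monomial or Hadamard-type) matrix, so all eigenvalues are roots of unity and $|G_{2}|$ is the value recorded in Table~\ref{Tab:OrderGgell}.

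First I would realize $G_{2}$ concretely as an $8\times 8$ complex matrix group generated by $\widetilde{T}_{2}$, $\widetilde{E}_{2}$, the matrices $\widetilde{M}_{2}(\sigma)$ as $\sigma$ ranges over $\FF_{2}^{2}\rtimes \mathrm{GL}(2,\FF_{2})$, and the scalar $\eta_{8}I$, and confirm its order. Next I would partition $G_{2}$ into conjugacy classes and, for a single representative $\sigma$ of each class, compute the characteristic polynomial and hence the degree-$8$ factor $\det(1-t\sigma)$; since conjugate elements contribute identically, each class enters the sum with weight equal to its cardinality. Assembling the weighted sum and dividing by $|G_{2}|$ produces a rational function which, placed over the common denominator $(1-t^{8})^{2}(1-t^{24})^{4}(1-t^{40})^{2}$, collapses to the asserted numerator; expanding the quotient as a power series then recovers $1+10t^{8}+68t^{16}+455t^{24}+2114t^{32}+7392t^{40}+\cdots$ as an independent check.

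Two consistency tests guide and confirm the calculation. The choice of denominator amounts to exhibiting a homogeneous system of parameters of degrees $8,8,24,24,24,24,40,40$; the genus-$1$ computation of Theorem~\ref{Thm:MolienJacG11}, together with the known Type~$\II$ weight enumerators, suggests the candidates in degrees $8,24,40$. Moreover each generator lies in $\mathrm{SL}(8,\CC)$ (one checks $\det\widetilde{T}_{2}=\det\widetilde{E}_{2}=\det\widetilde{M}_{2}(\sigma)=\det(\eta_{8}I)=1$), so $G_{2}\subseteq \mathrm{SL}(8,\CC)$ and the invariant ring is Gorenstein; by Stanley's functional equation this forces the numerator to be palindromic, which is visibly the case ($1,8,49,325,\ldots,325,49,8,1$) and serves as a strong arithmetic check on the whole computation.

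The main obstacle is entirely one of scale rather than concept. The group $G_{2}$ is large, with correspondingly many conjugacy classes, each requiring a degree-$8$ determinant; and combining the resulting several hundred rational summands into a single quotient with denominator of degree $192$ and numerator of degree $184$ is feasible only with computer algebra, which is why I would carry out Steps~two and three in SageMath. The only genuinely delicate point is fixing the denominator so that the Molien series simplifies to a truly polynomial numerator; once the primary invariants of degrees $8,24,40$ are pinned down, the remaining reduction is mechanical.
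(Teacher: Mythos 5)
Your proposal matches the paper's treatment: the paper gives no separate argument for this theorem beyond the setup you cite, namely that $\mathfrak{J}^{(2)}$ is identified with the invariant ring of $G_{2}$ via Theorem~\ref{Thm:JacInvUnderGg}, after which the series is obtained by evaluating Molien's formula $\frac{1}{|G_{2}|}\sum_{\sigma\in G_{2}}\det(1-t\sigma)^{-1}$ by machine (SageMath) and rewriting it over the stated denominator. Your additional checks (unitarity/finiteness, $G_{2}\subseteq\mathrm{SL}(8,\CC)$ forcing a Gorenstein invariant ring and hence the visibly palindromic numerator) are sound and go slightly beyond what the paper records, but the core approach is the same.
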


\begin{table}[t]
	\centering
	\begin{tabular}{c | cc}
		$g$ & 1  & 2 \\
		\hline
		${G}_{g}$ & 192 & 92160 \\
	\end{tabular} 
	\caption{Order of ${G}_{g}$}
	\label{Tab:OrderGgell}
\end{table}

\section{Ring of Jacobi polynomials}\label{Sec:GenInvRing}

In this section, 
we give the computation results for $g=1,2$
that obtain the bases of $\mathfrak{J}_{n}^{(g)}$
for $n \equiv 0 \pmod 8$. 
We show the dimension up to $|T|=n/2$. 
From Remark~\ref{rem:relation_on_Jacobi},
the dimension of $\mathfrak{J}_n^{(g)}$ with $|T|>n/2$ is equal to the dimension of $\mathfrak{J}_n^{(g)}$ with $|T|= |[n] \backslash T|$.

\subsection{In genus~$1$}

It is immediate from Theorem~\ref{Thm:MolienJacG11} that
$\dim \mathfrak{J}_{8}^{(1)} = 10$. 
We have the basis elements by computing $J_{d_{8}^{+},T}^{(1)}$ 
for $|T|=0,1,\cdots,8$.
For $|T| \neq 4$, 
we compute $8$ independent Jacobi polynomials 
$J_{d_{8}^{+},T}^{(1)}$.
The Jacobi polynomials $J_{d_{8}^{+},T}^{(1)}$ with $|T|=4$ is not unique. 
There are~$2$ independent $J_{d_{8}^{+},T}^{(1)}$ 
with respect to~$T$ where~$|T| = 4$.  
In particular, the choices of $T$'s are: 
$T_{1}=[1,2,3,4]$ and $T_{2}=[1,2,3,5]$.
Immediately,
these $10$ independent Jacobi polynomials 
generates~$\mathfrak{J}_{8}^{(1)}$ 
(see Table~\ref{tab_JandR}).
The basis element of $\mathfrak{J}_{8}$ are the following. 
\begin{align*}
	J_{C,0} & = x_{0}^{8} + 14 x_{0}^{4} x_{1}^{4} + x_{1}^{8}, \\
	J_{C,1} & = x_{0}^{7} y_{0} + 7 x_{0}^{3} x_{1}^{4} y_{0} + 7 x_{0}^{4} x_{1}^{3} y_{1} + x_{1}^{7} y_{1}, \\
	J_{C,2} & = x_{0}^{6} y_{0}^{2} + 3 x_{0}^{2} x_{1}^{4} y_{0}^{2} + 8 x_{0}^{3} x_{1}^{3} y_{0} y_{1} + 3 x_{0}^{4} x_{1}^{2} y_{1}^{2} + x_{1}^{6} y_{1}^{2}, \\
	J_{C,3} & = x_{0}^{5} y_{0}^{3} + x_{0} x_{1}^{4} y_{0}^{3} + 6 x_{0}^{2} x_{1}^{3} y_{0}^{2} y_{1} + 6 x_{0}^{3} x_{1}^{2} y_{0} y_{1}^{2} + x_{0}^{4} x_{1} y_{1}^{3} + x_{1}^{5} y_{1}^{3}, \\
	J_{C,T_1} & = x_{0}^{4} y_{0}^{4} + x_{1}^{4} y_{0}^{4} + 12 x_{0}^{2} x_{1}^{2} y_{0}^{2} y_{1}^{2} + x_{0}^{4} y_{1}^{4} + x_{1}^{4} y_{1}^{4}, \\
	J_{C,T_2} & = x_{0}^{4} y_{0}^{4} + 4 x_{0} x_{1}^{3} y_{0}^{3} y_{1} + 6 x_{0}^{2} x_{1}^{2} y_{0}^{2} y_{1}^{2} + 4 x_{0}^{3} x_{1} y_{0} z^{3} + x_{1}^{4} y_{1}^{4}, \\
	J_{C,5} & = x_{0}^{3} y_{0}^{5} + x_{1}^{3} y_{0}^{4} y_{1} + 6 x_{0} x_{1}^{2} y_{0}^{3} y_{1}^{2} + 6 x_{0}^{2} x_{1} y_{0}^{2} y_{1}^{3} + x_{0}^{3} y_{0} y_{1}^{4} + x_{1}^{3} y_{1}^{5}, \\
	J_{C,6} & = x_{0}^{2} y_{0}^{6} + 3 x_{1}^{2} y_{0}^{4} y_{1}^{2} + 8 x_{0} x_{1} y_{0}^{3} y_{1}^{3} + 3 x_{0}^{2} y_{0}^{2} y_{1}^{4} + x_{1}^{2} y_{1}^{6}, \\
	J_{C,7} & = x_{0} y_{0}^{7} + 7 x_{1} y_{0}^{4} y_{1}^{3} + 7 x_{0} y_{0}^{3} y_{1}^{4} + x_{1} y_{1}^{7}, \\
	J_{C,8} & = y_{0}^{8} + 14 y_{0}^{4} y_{1}^{4} + y_{1}^{8}.
\end{align*}
Again $\dim \mathfrak{J}_{16}^{(1)} = 40$. 
Our computation shows that 
$\mathfrak{J}_{8}^{(1)} \mathfrak{J}_{8}^{(1)}$ 
is enough to generate 
$\mathfrak{J}_{16}^{(1)}$
(see Table~\ref{tab_JandR}).
Moreover,  $\dim \mathfrak{J}_{24}^{(1)} = 130$.
We obtain $105$ basis elements of $\mathfrak{J}_{24}^{(1)}$ 
by $\mathfrak{J}_8^{(1)}\mathfrak{J}_8^{(1)}\mathfrak{J}_8^{(1)}$.
The remaining $25$ basis elements are taken by computing 
$J_{d_{24}^{+},T}^{(1)}$ with respect to~$T$ 
for~$|T|=0,1, \cdots, 24$.
The detail choices of $T$ is shown in Table \ref{tab_T24}, we omit the cases $|T|\leq 5$ and $|T|\geq 19$ because of 
the uniqueness of the Jacobi polynomials $J_{d_{24}^{+},T}^{(1)}$.

\begin{table}[ht!]
	\centering
	\begin{tabular}{cc}
		$|T|$ & $T$ \\
		\hline
		6  & $\left[1, 2, 3, 4, 5, 6\right]$ \\
		7  & $\left[1, 3, 4, 5, 6, 7, 8\right]$\\
		8  & $\left[1, 2, 3, 4, 5, 6, 7, 8\right]$\\
		9  & $\left[1, 3, 4, 5, 6, 7, 8, 9, 10\right]$\\
		10 & $\left[1, 2, 3, 4, 5, 6, 7, 8, 9, 10\right]$\\
		11 & $\left[1, 3, 4, 5, 6, 7, 8, 9, 10, 11, 12\right]$\\
		12 & $\left[1, 2, 3, 4, 5, 6, 7, 8, 9, 10, 11, 12\right]$\\
		13 & $\left[1, 3, 4, 5, 6, 7, 8, 9, 10, 11, 12, 13, 14\right]$\\
		14 & $\left[1, 2, 3, 4, 5, 6, 7, 8, 9, 10, 11, 12, 13, 14\right]$\\
		15 & $\left[1, 3, 4, 5, 6, 7, 8, 9, 10, 11, 12, 13, 14, 15, 16\right]$\\
		16 & $\left[1, 2, 3, 4, 5, 6, 7, 8, 9, 10, 11, 12, 13, 14, 15, 16\right]$\\
		17 & $\left[1, 3, 4, 5, 6, 7, 8, 9, 10, 11, 12, 13, 14, 15, 16, 17, 18\right]$\\
		18 & $\left[1, 2, 3, 4, 5, 6, 7, 8, 9, 10, 11, 12, 13, 14, 15, 16, 17, 18\right]$
	\end{tabular}
	\caption{Set of coordinates $T$}
	\label{tab_T24}
\end{table}

The computation shows that the basis of $\mathfrak{J}_8^{(1)}$ and $\mathfrak{J}_{24}^{(1)}$ are enough to generate 
$\mathfrak{J}_{n}^{(1)}$ for $n>24$
(see Table~\ref{tab_JandR}).

\begin{table}[h] 
	\centering
	\begin{tabular}{c|c|c|c|c|c|c|c}
		$n$ & 8 & 16 & 24 & 32 & 40 & 48 & 56 \\
		\hline
		$\dim \, \mathfrak{J}_{n}^{(1)}$ & 10 & 40 & 130 & 283 & 513 & 883 & 1372 \\
		$\dim \, \CC[\mathfrak{J}_8^{(1)}]_{n} $ & 10 & 40 & 105 & 219 & 396 & 650 & 995 \\
		$\dim \, \CC[\mathfrak{J}_8^{(1)}, \mathfrak{J}_{24}^{(1)}]_{n}$ & 10 & 40 & 130 & 283 & 513 & 883 & 1372 
	\end{tabular}
	\caption{Comparisons of dimensions}
	\label{tab_JandR}
\end{table} 

From the above discussions and the comparisons of dimensions 
that are shown in Table~\ref{tab_JandR},
we have the following result.

\begin{thm} \label{Thm:JacGenJ8J24}
	The invariant ring $\mathfrak{J}^{(1)}$ 
	is generated by the bases of 
	$\mathfrak{J}_8^{(1)}$ and $\mathfrak{J}_{24}^{(1)}$.
\end{thm}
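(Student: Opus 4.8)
The plan is to establish the equality $\mathfrak{J}^{(1)} = \CC[\mathfrak{J}_8^{(1)}, \mathfrak{J}_{24}^{(1)}]$ of graded rings by a graded-piece comparison argument. Since both sides are graded by $n \equiv 0 \pmod 8$ and the right-hand side is visibly a subring of the left-hand side (the products of Jacobi polynomials in $\mathfrak{J}_8^{(1)}$ and $\mathfrak{J}_{24}^{(1)}$ are themselves Jacobi polynomials of Type~$\II$ codes, or limits/combinations thereof, hence lie in $\mathfrak{J}^{(1)}$), it suffices to prove that in every degree $n$ the inclusion $\CC[\mathfrak{J}_8^{(1)}, \mathfrak{J}_{24}^{(1)}]_n \subseteq \mathfrak{J}_n^{(1)}$ is an equality of finite-dimensional $\CC$-vector spaces. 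Because the left space sits inside the right, equality in each degree is equivalent to equality of dimensions in each degree, so the entire theorem reduces to a dimension count.

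First I would record the dimensions of the graded pieces $\dim \mathfrak{J}_n^{(1)}$, which are supplied in closed form by the dimension formula of Theorem~\ref{Thm:MolienJacG11}; its power series expansion gives $10, 40, 130, 283, 513, \dots$ in degrees $8, 16, 24, 32, 40, \dots$. Next I would describe the explicit generating sets found in the text: the ten Jacobi polynomials $J_{C,0},\dots,J_{C,8}$ spanning $\mathfrak{J}_8^{(1)}$ and the twenty-five additional polynomials $J_{d_{24}^+,T}^{(1)}$ (beyond the $105$ coming from products of degree-$8$ generators) spanning the new directions in $\mathfrak{J}_{24}^{(1)}$. The crux is then to verify, degree by degree, that the subring they generate fills out all of $\mathfrak{J}_n^{(1)}$. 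This is exactly what Table~\ref{tab_JandR} records: the bottom two rows, showing $\dim\CC[\mathfrak{J}_8^{(1)}]_n$ and $\dim\CC[\mathfrak{J}_8^{(1)},\mathfrak{J}_{24}^{(1)}]_n$, demonstrate that adjoining the degree-$24$ generators closes the gap with $\dim\mathfrak{J}_n^{(1)}$ in every tabulated degree $n \le 56$.

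The main obstacle is that a finite table only certifies equality up to degree $56$, whereas the theorem asserts it for all $n$. To bridge this I would invoke the structure of the Hilbert series from Theorem~\ref{Thm:MolienJacG11}, whose denominator $(1-t^8)^2(1-t^{24})^2$ exhibits a homogeneous system of parameters in degrees $8, 8, 24, 24$, so that $\mathfrak{J}^{(1)}$ is a finitely generated Cohen--Macaulay (in fact free) module over the polynomial subring generated by such a system of parameters, with the numerator encoding finitely many module generators whose top degree is $56$. Thus once the candidate generators account for every graded piece through the top numerator degree, they account for all higher pieces automatically by freeness, and the finite verification in Table~\ref{tab_JandR} is genuinely sufficient. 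The delicate point to state carefully is that the elements realizing the system of parameters and the module generators indeed lie among the bases of $\mathfrak{J}_8^{(1)}$ and $\mathfrak{J}_{24}^{(1)}$; granting this, the matching of the two Hilbert series forces $\CC[\mathfrak{J}_8^{(1)}, \mathfrak{J}_{24}^{(1)}] = \mathfrak{J}^{(1)}$, completing the proof.
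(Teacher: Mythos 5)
Your proposal is correct, and its computational core coincides with the paper's own justification: in the paper, Theorem~\ref{Thm:JacGenJ8J24} is asserted exactly on the basis of the reduction you describe, namely comparing $\dim \CC[\mathfrak{J}_8^{(1)},\mathfrak{J}_{24}^{(1)}]_n$ against $\dim \mathfrak{J}_n^{(1)}$ (supplied by the Molien series of Theorem~\ref{Thm:MolienJacG11}) degree by degree, as recorded in Table~\ref{tab_JandR} for $n = 8, 16, \dots, 56$. Where you genuinely go beyond the paper is your last paragraph: the paper stops at the table and never explains why agreement through degree $56$ implies agreement in all degrees, whereas you supply the Cohen--Macaulay/free-module argument over a homogeneous system of parameters in degrees $8,8,24,24$, with module generators in degrees at most $56$ (the top degree of the Hilbert series numerator). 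This is the right way to close that gap; without it one would only have Noether's bound (generator degrees at most $|G_1| = 192$), which the table does not reach. Two refinements to your flagged ``delicate point.'' First, half of it is automatic: once dimensions agree in degrees $\le 56$, any homogeneous system of parameters of $\mathfrak{J}^{(1)}$ in degrees $8$ and $24$, and any homogeneous module generators (which may be chosen of degree $\le 56$), already lie in $\CC[\mathfrak{J}_8^{(1)},\mathfrak{J}_{24}^{(1)}]$, since in those degrees the two spaces coincide. Second, what is \emph{not} automatic is that a system of parameters in degrees $8,8,24,24$ exists at all: the shape of the denominator of the Hilbert series does not by itself guarantee this, and the paper's observation that $J^{(1)}_{d_8^+,\emptyset}$, $J^{(1)}_{d_8^+,[8]}$, $J^{(1)}_{d_{24}^+,\emptyset}$, $J^{(1)}_{d_{24}^+,[24]}$ are algebraically independent is strictly weaker than their forming a system of parameters, so a finiteness verification (e.g., that their common zero locus is the origin) is still required; likewise, Cohen--Macaulayness via Hochster--Eagon applies to the full invariant ring $\CC[x_0,x_1,y_0,y_1]^{G_1}$, so one should note that Theorem~\ref{Thm:MolienJacG11} identifies $\mathfrak{J}^{(1)}$ with that ring degree by degree. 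With those finite checks made explicit, your argument is complete, and it is in fact more rigorous than the paper's own.
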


From the dimension formula of the invariant ring~$\mathfrak{J}^{(1)}$
given in Theorem~\ref{Thm:MolienJacG11}, 
we can find that there are~$4$ algebraically independent generators. 
Based on our computation, 
the generators which are algebraically independent are 
\[
J_{d_8^+, \emptyset}^{(1)},
J_{d_8^+, [8]}^{(1)},
J_{d_{24}^+, \emptyset}^{(1)},
J_{d_{24}^+, [24]}^{(1)}.
\]

\subsection{In genus~$2$}

In this part, we show our results up to $n=24$.
For $n=8,16$, the code $d_n^+$ is enough to generate 
$\mathfrak{J}_{n}^{(2)}$.
However, we use the codes $d_{24}^+$ and $g_{24}$ for $n=24$.

For $n=8$,
the dimension of $\mathfrak{J}_8^{(2)}$ for each $T$ is shown in Table \ref{tab_g2n8}.
The sets of coordinates used are the same as $g=1$ case. 

\begin{table}
\centering
\begin{tabular}{cc}
$|T|$ & $\mathfrak{J}_8^{(2)}$ \\
\hline
0 & 1 \\
1 & 1 \\
2 & 1 \\
3 & 1 \\
4 & 2
\end{tabular}
\caption{Number of basis elements in $\mathfrak{J}_{8}^{(2)}$ attached to $T$}
\label{tab_g2n8}
\end{table}

We continue for $n=16$.
From the computation,
the multiplication of two elements of $\mathfrak{J}_8^{(2)}$ is not enough to generate $\mathfrak{J}_{16}^{(2)}$.
The remain basis elements taken from $J_{d_{16}^+,T}^{(2)}$.
The numbers written in the 2nd column in Table \ref{tab_g2n16} 
are the number the basis elements coming from $\mathfrak{J}_{8}^{2} $ and $J_{d_{16}^+,T}^{(2)}$, respectively.

\begin{table}[ht]
\centering
\begin{tabular}{ccc}
$|T|$ & $\mathfrak{J}_{16}^{(2)}$ & Total\\
\hline
0 & 1 + 0 & 1\\
1 & 1 + 0 & 1\\
2 & 2 + 1 & 3\\
3 & 2 + 1 & 3\\
4 & 4 + 1 & 5\\
5 & 4 + 1 & 5\\
6 & 5 + 1 & 6\\
7 & 5 + 1 & 6\\
8 & 7 + 1 & 8
\end{tabular}
\caption{Number of basis elements in $\mathfrak{J}_{16}^{(2)}$ attached to $T$}
\label{tab_g2n16}
\end{table}

For $n = 24$, 
we add one more Type II code $p_{24}$ whose generator matrix is
\[
\left(\begin{array}{rrrrrrrrrrrrrrrrrrrrrrrr}
1 & 0 & 0 & 1 & 0 & 1 & 0 & 1 & 0 & 1 & 0 & 1 & 0 & 1 & 0 & 1 & 0 & 1 & 0 & 1 & 0 & 1 & 1 & 0 \\
0 & 1 & 0 & 1 & 0 & 1 & 0 & 1 & 0 & 1 & 0 & 1 & 0 & 0 & 0 & 0 & 0 & 0 & 0 & 0 & 0 & 0 & 1 & 1 \\
0 & 0 & 1 & 1 & 0 & 0 & 0 & 0 & 0 & 0 & 0 & 0 & 0 & 1 & 0 & 1 & 0 & 1 & 0 & 1 & 0 & 1 & 0 & 1 \\
0 & 0 & 0 & 0 & 1 & 1 & 0 & 0 & 0 & 0 & 0 & 0 & 0 & 1 & 0 & 1 & 0 & 1 & 0 & 1 & 0 & 1 & 0 & 1 \\
0 & 0 & 0 & 0 & 0 & 0 & 1 & 1 & 0 & 0 & 0 & 0 & 0 & 1 & 0 & 1 & 0 & 1 & 0 & 1 & 0 & 1 & 0 & 1 \\
0 & 0 & 0 & 0 & 0 & 0 & 0 & 0 & 1 & 1 & 0 & 0 & 0 & 1 & 0 & 1 & 0 & 1 & 0 & 1 & 0 & 1 & 0 & 1 \\
0 & 0 & 0 & 0 & 0 & 0 & 0 & 0 & 0 & 0 & 1 & 1 & 0 & 1 & 0 & 1 & 0 & 1 & 0 & 1 & 0 & 1 & 0 & 1 \\
0 & 0 & 0 & 0 & 0 & 0 & 0 & 0 & 0 & 0 & 0 & 0 & 1 & 1 & 0 & 0 & 0 & 0 & 0 & 0 & 0 & 0 & 1 & 1 \\
0 & 0 & 0 & 0 & 0 & 0 & 0 & 0 & 0 & 0 & 0 & 0 & 0 & 0 & 1 & 1 & 0 & 0 & 0 & 0 & 0 & 0 & 1 & 1 \\
0 & 0 & 0 & 0 & 0 & 0 & 0 & 0 & 0 & 0 & 0 & 0 & 0 & 0 & 0 & 0 & 1 & 1 & 0 & 0 & 0 & 0 & 1 & 1 \\
0 & 0 & 0 & 0 & 0 & 0 & 0 & 0 & 0 & 0 & 0 & 0 & 0 & 0 & 0 & 0 & 0 & 0 & 1 & 1 & 0 & 0 & 1 & 1 \\
0 & 0 & 0 & 0 & 0 & 0 & 0 & 0 & 0 & 0 & 0 & 0 & 0 & 0 & 0 & 0 & 0 & 0 & 0 & 0 & 1 & 1 & 1 & 1
\end{array}\right).
\]
The code $p_{24}$ is the first code written on \cite{munemasa}. 
The basis elements of $\mathfrak{J}^{(2)}_{24}$ are obtained by $\mathfrak{J}^{(2)}_8\, \mathfrak{J}^{(2)}_8 \, \mathfrak{J}^{(2)}_8$, $J^{(2)}_{d_{24}^+,T}$, and $J_{g_{24},T}^{(2)}$.
The numbers written in 2nd column in 
Table \ref{tab_g2n24} are the number of basis elements coming from 
$\mathfrak{J}^{(2)}_8\mathfrak{J}^{(2)}_8\mathfrak{J}^{(2)}_8$, $J^{(2)}_{d_{24}^+,T}$, $J_{g_{24},T}^{(2)}$, and $J_{p_{24},T}^{(2)}$, respectively.
The dimension of $\mathfrak{J}^{(2)}_{24}$ that we can obtain is~$455$
which is equal to the number given in Theorem~\ref{Thm:MolienJac21}.

\begin{table}[ht]
\centering
\begin{tabular}{ccr}
$|T|$ & $\mathfrak{J}_{24}^{(2)}$ & \text{Total}\\
\hline
0  & 1  + 1 + 1 + 0 & 3\\
1  & 1  + 1 + 1 + 0 & 3\\
2  & 3  + 2 + 1 + 0 & 6\\
3  & 5  + 2 + 1 + 1 & 9\\
4  & 8  + 3 + 1 + 1 & 13\\
5  & 10 + 3 + 1 + 1 & 15\\
6  & 15 + 5 + 0 + 1 & 21\\
7  & 16 + 5 + 0 + 1 & 22\\
8  & 20 + 6 + 0 + 1 & 27\\
9  & 22 + 6 + 0 + 1 & 29\\
10 & 24 + 5 + 1 + 1 & 31\\
11 & 24 + 5 + 1 + 1 & 31\\
12 & 27 + 6 + 1 + 1 & 35
\end{tabular}
\caption{Number of basis elements in $\mathfrak{J}_{24}^{(2)}$ attached to $T$}
\label{tab_g2n24}
\end{table}

From the dimension formula of the invariant ring~$\mathfrak{J}^{(2)}$
given in Theorem~\ref{Thm:MolienJac21}, 
we can find that there are~$8$ algebraically independent generators. 
Based on our computation, for $n=8,24$
the generators with this characteristics are: 
 
\[
J_{d_8^+, \emptyset},
J_{d_8^+, [8]},
J_{d_{24}^+, \emptyset},
J_{d_{24}^+, [24]},
J_{g_{24}, \emptyset},
J_{g_{24}, [24]}
\]

\begin{rem}
It is well-known from the works~\cite{Duke, Oura, Runge}
that the weight enumerators of the codes $d_8^{+}$ and $g_{24}$ 
in genus~$2$
are enough to generate the invariant ring
of weight enumerators  of the 
Type~$\II$ codes in genus~$2$ up to~$n = 24$. 
However, our computations shows that
the Jacobi polynomials of $d_8^{+}$ and $g_{24}$
are inadequate to generate the invariant ring~$\mathfrak{J}^{(2)}_{24}$.
Specifically, the Jacobi polynomials of code $p_{24}$ are also required.
\end{rem}

\begin{rem}
The Assmus--Mattson theorem is one of the most important theorems in design and coding theory. By this theorem, we show that all the shells of $d_8^+$ are 3-designs, as proved in Section 6.1 using an invariant-theoretic approach. This suggests that the Assmus--Mattson theorem might be provable using invariant theory.
For related results, 
see
\cite{{BM1},{BM2},{BMY},{extremal design H-M-N},{Miezaki2},{MMN},{extremal design2 M-N}}. 
\end{rem}

\section*{Declaration of competing interest}

The authors declare that 
they have no known competing financial interests or personal relationships 
that could have appeared to influence the work reported in this paper.

\section*{Acknowledgements}
The authors would also like to thank the anonymous
reviewers for their valuable comments on a previous version of the manuscript.
This work was supported by JSPS KAKENHI Grant Numbers 22K03277,
24K06827 
and
SUST Research Centre under 
Project ID
PS/2023/1/22.

\section*{Data availability statement}
The data that support the findings of this study are available from
the corresponding author.

\end{document}